\newcommand{\e}{\varepsilon}
\newcommand{\iy}{\infty}
\newcommand{\st}{\ : \ }
\renewcommand{\leq}{\leqslant}
\renewcommand{\geq}{\geqslant}
\newcommand{\gL}{\mathsf{L}}
\newcommand{\R}{\mathbf{R}}
\newcommand{\Rp}{\mathbf{R}_+}
\newcommand{\C}{\mathcal{C}}
\newcommand{\CC}{\mathscr{C}}
\newcommand{\Id}{\mathrm{Id}}
\newcommand{\Herm}{\mathbb{H}}
\newcommand{\PSD}{\mathrm{PSD}}
\DeclareMathOperator{\conv}{\mathrm{conv}}
\DeclareMathOperator{\card}{\mathrm{card}}
\DeclareMathOperator{\mathspan}{\mathrm{span}}
\DeclareMathOperator{\aff}{\mathrm{aff}}
\DeclareMathOperator{\inter}{int}
\DeclareMathOperator{\relint}{relint}
\DeclareMathOperator{\relbd}{relbd}
\newcommand{\tmin}{\varodot}
\newcommand{\tmax}{\varoast}
\newcommand{\scalar}[2]{\langle #1 , #2\rangle}
\DeclareMathOperator{\cone}{\mathrm{cone}}
\DeclareMathOperator{\Ext}{\mathrm{Ext}}
\theoremstyle{plain}
\newtheorem{theorem}{Theorem}
\newtheorem{proposition}{Proposition}
\newtheorem{lemma}[proposition]{Lemma}
\newtheorem{corollary}[proposition]{Corollary}
\newtheorem{result}[proposition]{Result}
\theoremstyle{definition}
\newtheorem*{definition}{Definition}
\newtheorem{claim}[proposition]{Claim}
\newtheorem{remark}[proposition]{Remark}
\def\thmhead@plain#1#2#3{%
  \thmname{#1}\thmnumber{\@ifnotempty{#1}{ }\@upn{#2}}%
  \thmnote{ {\the\thm@notefont#3}}}
\let\thmhead\thmhead@plain
\newcommand{\kite}{\mathsf{Q}}
\newcommand{\bsquare}{\mathsf{S}_b}
\newcommand{\rays}{t}
\newcommand{\rayS}{T}
\newcommand{\rayt}{u}
\newcommand{\rayT}{U}
\begin{document}

\title{Entangleability  of cones}
\author{Guillaume Aubrun}
\address{Institut Camille Jordan, Universit\'e Claude Bernard Lyon 1, 43 boulevard du 11 novembre 1918, 69622 Villeurbanne CEDEX, France}
\email{aubrun@math.univ-lyon1.fr}

\author{Ludovico Lami}
\address{School of Mathematical Sciences and Centre for the Mathematics and Theoretical Physics of Quantum Non-Equilibrium Systems, University of Nottingham, University Park, Nottingham NG7 2RD, United Kingdom}
\address{Institute of Theoretical Physics and IQST, Universit\"{a}t Ulm, Albert-Einstein-Allee 11D-89069 Ulm, Germany}
\email{ludovico.lami@gmail.com}

\author{Carlos Palazuelos}
\address{Dpto. An\'alisis Matem\'atico y Matem\'atica Aplicada, Fac. Ciencias Matemáticas, Universidad Complutense de Madrid, Plaza de Ciencias s/n 28040 Madrid, Spain \& Instituto de Ciencias Matem\'aticas, C/ Nicol\'as Cabrera, 13-15, 28049 Madrid, Spain}
\email{carlospalazuelos@mat.ucm.es}

\author{Martin Pl\'avala}
\address{Mathematical Institute, Slovak Academy of Sciences, \v{S}tef\'{a}nikova 49, Bratislava, Slovakia}
\address{Naturwissenschaftlich-Technische  Fakult\"{a}t, Universit\"{a}t Siegen, 57068 Siegen, Germany}
\email{martin.plavala@uni-siegen.de}

\date{\today}
\subjclass[2010]{Primary: 52A20, 47L07, Secondary: 81P16}
\keywords{Tensor product of cones, entangleability, general probabilistic theories}

\begin{abstract}
We solve a long-standing conjecture by Barker, proving that the minimal and maximal tensor products of two finite-dimensional proper cones coincide if and only if one of the two cones is generated by a linearly independent set. 
Here, given two proper cones $\C_1$, $\C_2$, their minimal tensor product is the cone generated by products of the form $x_1\otimes x_2$, where $x_1\in \C_1$ and $x_2\in \C_2$, while their maximal tensor product is the set of tensors that are positive under all product functionals $\varphi_1\otimes \varphi_2$, where $\varphi_1|_{\C_1}\geq 0$ and $\varphi_2|_{\C_2}\geq 0$. 
Our proof techniques involve a mix of convex geometry, elementary algebraic topology, and computations inspired by quantum information theory. Our motivation comes from the foundations of physics: as an application, we show that any two non-classical systems modelled by general probabilistic theories can be entangled.
\end{abstract}

\maketitle

\section{Introduction}

Cones are central objects in various areas of pure and applied mathematics, such as linear algebra, optimisation, convex geometry, differential equations or dynamical systems. Duality usually plays an important role: a convex cone can be either described from the inside (as the set of positive linear combinations of some family of generators) or from the outside (as the set of vectors satisfying some family of linear inequalities). 

When studying linear maps between cones (so, positive operators), tensor products appear naturally. Given two finite-dimensional cones $\C_1$ and $\C_2$, one may define `from the inside' their minimal tensor product $\C_1 \tmin \C_2$, or `from the outside' their maximal tensor product $\C_1 \tmax \C_2$, in such a way that $\C_1 \tmin \C_2\subseteq \C_1 \tmax \C_2$. In formulae, we have
\[ \C_1 \tmin \C_2 \coloneqq \conv \{ x_1 \otimes x_2 \st x_1 \in \C_1, \ x_2 \in \C_2 \} \ \ \textnormal{and} \ \  \C_1 \tmax \C_2 \coloneqq (\C_1^* \tmin \C_2^*)^* ,\]
where $\C^*$ is the dual cone to a cone $\C$. We restrict to proper cones (a closed convex cone $\C$ in a  finite-dimensional real vector space $V$ is \emph{proper} it if satisfies both $\C \cap (-\C) = \{0\}$ and $\C - \C = V$).

In this paper we face the following fundamental question: given a pair of cones $(\C_1,\C_2)$, decide whether $\C_1 \tmin \C_2 = \C_1 \tmax \C_2$ or $\C_1 \tmin \C_2 \subsetneq \C_1 \tmax \C_2$. For reasons which we explain below, we say that the pair $(\C_1,\C_2)$ is \emph{nuclear} in the first case, and \emph{entangleable} in the second case. This question dates back to the work of Barker and Namioka--Phelps in the 1970s.

Our main result provides a simple characterisation of nuclearity, which was conjectured 40 years ago by Barker~\cite{Barker76,Barker81}: a pair $(\C_1,\C_2)$ is nuclear if and only if either $\C_1$ or $\C_2$ is classical. By a \emph{classical} cone we mean a cone isomorphic to $\R_+^n$, or equivalently a cone whose bases are simplices. A famous result by Namioka and Phelps~\cite{NamiokaPhelps69} (see also~\cite{Effros72}) states that if $\C_\square$ denotes a $3$-dimensional cone with $4$ extreme rays (all such cones are isomorphic) and $\C$ is any proper cone, then $\C \tmin \C_\square= \C \tmax \C_\square$ if and only if $\C$ is classical. Note that, according to our main result, the same statement is true if $\C_\square$ is replaced by any non-classical cone.

The case of cones with a centrally symmetric base deserves some attention. If $\C_1$ and $\C_2$ are such cones, their bases can be seen as the unit balls of normed spaces $X_1$ and $X_2$. One checks (see~\cite[Proposition 2.25]{lamiatesi} for a precise statement) that $\C_1 \tmin \C_2$ and $\C_1 \tmax \C_2$ can be related respectively to the projective and injective norms on $X_1 \otimes X_2$. Therefore, when specialised to cones with a centrally symmetric base, our main result is equivalent to the fact that the projective and injective norms are distinct on $X_1 \otimes X_2$ whenever $\dim(X_i) \geq 2$ (for stronger results in this direction, see~\cite{XOR}). Further applications of this connection to the problem of measurement compatibility have been studied in~\cite{Bluhm2020}.

We use the terminology `nuclear' by analogy with the case of $C^*$-algebras. Recall that a pair $(A,B)$ of $C^*$-algebras is a nuclear pair if $A \otimes_{\min} B = A \otimes_{\max} B$, and that a single $C^*$-algebra  $A$ is nuclear if $(A,B)$ is a nuclear pair for every $C^*$-algebra $B$~\cite[Chapter 9]{Pisier20}. Our main result may look surprising to readers familiar with $C^*$-algebras, since the $C^*$-algebraic version does not hold. Indeed, there are examples of non-nuclear $C^*$-algebras $A$, $B$ such that $(A,B)$ is a nuclear pair, the most famous being $A=C^*(\mathbb{F}_\infty)$, $B=B(\ell_2)$, due to Kirchberg~\cite{Kirchberg93}. Remarkably, the analogue of our result becomes true if we restrict to von Neumann algebras~\cite[Theorem 18.13]{Pisier20}. 

For cones of positive semi-definite matrices, the fact that the minimal and maximal tensor products do not coincide is intimately connected to the phenomenon of quantum entanglement. This observation explains our terminology `entangleable'. Moreover, although we hardly mention them in the present paper, the question we study has very strong motivations stemming from the foundations of physics, and more precisely from the study of general probabilistic theories (GPTs), a framework based on convex geometry which encompasses both classical probability and quantum physics. In this context, our result implies that -- under a natural no-restriction hypothesis -- any two non-classical GPTs exhibit some form of entanglement when combined, either at the level of states or at that of measurements~\cite{ALPP} (see also~\cite[Chapter~2]{lamiatesi}).

Our proof of Barker's conjecture goes as follows: we exhibit a geometric property, the kite-square sandwiching, which we prove to characterise precisely non-classical cones. This geometric property involves cones based on two specific planar shapes: the \emph{kite} and the \emph{blunt square}. We then show that kite-square sandwichings can be used to produce a certificate of entangleability. Our methods involve convex geometry, elementary algebraic topology and explicit computations which are inspired by quantum information theory.

We restrict ourselves to finite-dimensional cones in the present paper. One may define tensor products, nuclearity and entangleability for infinite-dimensional cones, for example using the language of function systems as in~\cite{Effros72,Han16}. However, since most of the tools we use, either topological or geometric, are inherently finite-dimensional, the study of infinite-dimensional cones will probably require a different approach.

As a byproduct of our main result, we answer a question raised in the study of matrix convex sets~\cite{PSS18}, which happens to be a particular case of Barker's conjecture (see Corollary~\ref{corollary:OS} below).

\section{Notation and statement of the main results} 

\subsection{Convex cones}

Throughout the paper, all the vector spaces are assumed to be finite-di\-men\-sion\-al and over the real field. We denote vector spaces by symbols such as $V$, $V_1$ or $V'$. A subset $\C$ of a vector space $V$ is a \emph{convex cone}, or simply a \emph{cone}, if it satisfies $sx+ty \in \C$ for every $x$, $y \in \C$ and $s$, $t \in \Rp$ (we denote by $\Rp$ the half-line $[0,\iy)$). We denote by $\cone(A)$ the cone generated by a subset $A \subset V$. 

A cone $\C \subset V$ is said to be \emph{generating} if it spans $V$ as a vector space, or equivalently if $\C - \C = V$. Also, $\C$ is said to be \emph{salient} (also called \emph{pointed}) if it does not contain a line, or equivalently if $\C \cap (-\C) = \{0\}$. Finally, $\C$ is said to be \emph{proper} if it is closed, salient and generating. 

A \emph{convex body} is a compact convex subset of a vector space with nonempty interior. We denote respectively by $\inter(K)$ and $\partial K$ the interior and boundary of a convex body $K$. If $K \subset V$ is a convex set, then the \emph{cone over} $K$ is the cone in $V \times \R$ defined as
\[ \CC(K) = \cone( K \times \{1\} ) = \{ (x \,;\,t) \in V\times \Rp \st x \in tK \}. \]
If $K$ is a convex body, then $\CC(K)$ is easily shown to be a proper cone.

Two cones $\C$ and $\C'$, living in vector spaces $V$ and $V'$, are called \emph{isomorphic} if there is a linear bijection $\Phi : V \to V'$ such that $\Phi(\C) = \C'$. We use repeatedly the following elementary fact: if $\C$ is a proper cone, then there is a convex body $K$ in $\R^{\dim (\C) -1}$ such that $\C$ is isomorphic to $\CC(K)$. 

Let $V$ be a vector space, and  $V^*$ its dual space. If $\C$ is a cone in $V$, its \emph{dual cone} is defined as
\[ \C^* = \{ f \in V^* \st f(x) \geq 0 \textnormal{ for every } x \in \C \} .\]
The bipolar theorem~\cite[Theorem~14.1]{Rockafellar70} asserts that for a closed cone $\C$, we have $\C = (\C^*)^*$ when identifying $V$ with the bidual $V^{**}$.

Let $\C$ be a cone. An element $x \in \C$ is an \emph{extreme ray generator} if the equation $x=y+z$ for $y$, $z \in \C$ implies $y=\alpha x$ for some $\alpha\in [0,1]$. In that case, the set $\{tx \st t\in\Rp\}$ is called an \emph{extreme ray} of $\C$.

\subsection{Entangleability of cones}

How to define the tensor product of two cones? It has been realised by several authors~\cite{Barker81, Birnbaum76, Mulansky97, NamiokaPhelps69} that there are at least two meaningful answers, since one may define naturally the minimal and the maximal tensor product of two cones. These objects, which are sometimes called the projective and injective tensor products, are dual to each other. We now introduce them.

Let $V_1$, $V_2$ be vector spaces, and $\C_1 \subset V_1$, $\C_2 \subset V_2$ be convex cones.
We define the \emph{minimal tensor product} of $\C_1$ and $\C_2$ as
\begin{equation} \label{eq:def-tmin} \C_1 \tmin \C_2 \coloneqq \conv \{ x_1 \otimes x_2 \st x_1 \in \C_1, \ x_2 \in \C_2 \}, \end{equation}
and the \emph{maximal tensor product} of $\C_1$ and $\C_2$ as
\begin{equation} \label{eq:def-tmax} \C_1 \tmax \C_2 \coloneqq \{ z \in V_1 \otimes V_2 \st (f_1 \otimes f_2)(z) \geq 0 \textnormal{ for every } f_1 \in \C_1^*,\ f_2 \in \C_2^* \} .\end{equation}
It is trivial to check that the inclusion $\C_1 \tmin \C_2 \subset \C_1 \tmax \C_2$ holds always true. Following Barker, `a  major  open question is to determine necessary and sufficient conditions for equality to hold'~\cite[p.~197]{Barker76}. Our paper answers this question.

By definition, we have that $\C_1 \tmax \C_2 = (\C_1^* \tmin \C_2^*)^*$, where we identify $V_1 \otimes V_2$ with $(V_1^* \otimes V_2^*)^*$. If $\C_1$ and $\C_2$ are proper, then $\C_1 \tmin \C_2$ is proper as well (see {\cite[Exercise 4.14]{ABMB}}) and the bipolar theorem implies that $\C_1 \tmin \C_2 = (\C_1^* \tmax \C_2^*)^*$: the minimal and maximal tensor products are dual to each other.

Let $\C_1$, $\C_2$ be two proper cones. We say that the pair $(\C_1,\C_2)$ is \emph{nuclear} if $\C_1 \tmin \C_2 = \C_1 \tmax \C_2$, and that $(\C_1,\C_2)$ is \emph{entangleable} if $\C_1 \tmin \C_2 \neq \C_1 \tmax \C_2$. The terminology `nuclear' is borrowed from the analogous notion in $C^*$-algebras, while the concept of entangleability comes from the interpretation of cones in the context of general probabilistic theories (GPTs), of which quantum mechanics is a special case (see~\cite{ALPP} for a thorough discussion of these ideas). Cones corresponding to quantum mechanics belong to the family $(\PSD_n)_{n \geq 1}$, where $\PSD_n$ denotes the cone of $n \times n$ positive semi-definite matrices with complex entries. The phenomenon of quantum entanglement is connected with the fact that $\PSD_m \tmin \PSD_n \neq \PSD_m \tmax \PSD_n$ for $m$, $n \geq 2$, and therefore $(\PSD_m,\PSD_n)$ is a fundamental example of an entangleable pair.

A cone $\C$ is said to be \emph{classical} if it is isomorphic to $\R_+^d$ for $d= \dim (\C)$. (This terminology comes from the fact that $\R_+^d$ corresponds to classical probability theory on an alphabet of size $d$ in the GPT formalism. Alternative terms such as `simplicial cone', `minihedral cone' or `lattice cone' are used throughout the literature). Equivalently, a cone $\C$ in a vector space $V$ is classical if and only if there is a basis $A$ of $V$ (as a vector space) such that $\C=\cone(A)$. It was noticed early~\cite{NamiokaPhelps69} that a pair $(\C_1,\C_2)$ of proper cones is nuclear whenever either $\C_1$ or $\C_2$ is classical, and a natural conjecture, implicit in~\cite{Barker76} and explicit in~\cite{Barker81}, is that the converse holds. We prove this conjecture, giving a complete understanding of the entangleability of cones.

\begin{theorem} \label{theorem:main}
Let $\C_1$ and $\C_2$ be proper cones. Then $(\C_1,\C_2)$ is nuclear if and only if $\C_1$ or $\C_2$ is classical.
\end{theorem}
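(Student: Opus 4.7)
The direction ``classical implies nuclear'' is the observation of Namioka--Phelps. If $\C_1 = \cone(e_1,\ldots,e_d)$ with $\{e_i\}$ a basis of $V_1$, then $\C_1^* = \cone(e_1^*,\ldots,e_d^*)$; decomposing any $z \in \C_1 \tmax \C_2$ as $z = \sum_i e_i \otimes y_i$ and testing against the product functional $e_i^* \otimes f$ for arbitrary $f \in \C_2^*$ yields $f(y_i) \geq 0$, hence $y_i \in \C_2^{**} = \C_2$ by the bipolar theorem, and so $z \in \C_1 \tmin \C_2$. The substantive content of Theorem~\ref{theorem:main} is therefore the converse: if neither $\C_1$ nor $\C_2$ is classical, exhibit an explicit witness $z \in (\C_1 \tmax \C_2) \setminus (\C_1 \tmin \C_2)$.

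My plan follows the three-step strategy advertised in the introduction. The first step is to isolate a geometric feature common to all non-classical cones, namely the \emph{kite--square sandwiching}. Let $\kite$ be a suitably normalised quadrilateral with two pairs of adjacent equal sides, and let $\bsquare$ be a square with its four corners truncated, embedded so that $\kite$ sits inside $\bsquare$ along a fixed reference inclusion $\iota$; the three-dimensional cones $\CC(\kite)$ and $\CC(\bsquare)$ then serve as canonical ``test shapes''. A proper cone $\C$ admits a kite--square sandwiching if there exist positive linear maps $\Phi : \CC(\kite) \to \C$ and $\Psi : \C \to \CC(\bsquare)$ whose composition equals the prescribed map $\iota$. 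The core geometric claim I would establish is that $\C$ is non-classical if and only if $\C$ admits such a sandwiching.

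Proving the sandwiching for every non-classical cone is the step I expect to be the main obstacle. I would first reduce to the case $\dim \C = 3$ by a linear section or quotient that preserves non-classicality, and then analyse the base of $\C$ directly. Constructing the inner map $\Phi$ is an elementary convex-hull argument once a non-triangular face of the base is located; but $\Psi$ must simultaneously map four designated extreme rays of $\C$ onto the four long edges of $\bsquare$ in the correct cyclic order while remaining positive. This is where elementary algebraic topology enters: parametrising admissible pairs $(\Phi,\Psi)$ by a compact connected moduli space, the existence of a good pair reduces to the non-vanishing of a degree/winding-number invariant of an auxiliary map into a configuration space of ordered quadruples on $\partial \bsquare$. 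Verifying this invariant in the non-classical case is the technical crux.

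Once sandwichings $(\Phi_i,\Psi_i)$ are available for $i=1,2$, functoriality of $\tmin$ and $\tmax$ under positive linear maps yields the commutative diagram
\[
\CC(\kite) \tmax \CC(\kite) \xrightarrow{\Phi_1 \otimes \Phi_2} \C_1 \tmax \C_2 \xrightarrow{\Psi_1 \otimes \Psi_2} \CC(\bsquare) \tmax \CC(\bsquare),
\]
with the analogous diagram for $\tmin$; the composition along each row equals $\iota \otimes \iota$. It thus suffices to exhibit a single tensor $z_0 \in \CC(\kite) \tmax \CC(\kite)$ with $(\iota \otimes \iota)(z_0) \notin \CC(\bsquare) \tmin \CC(\bsquare)$, for then $(\Phi_1 \otimes \Phi_2)(z_0)$ is a witness in $(\C_1 \tmax \C_2) \setminus (\C_1 \tmin \C_2)$. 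Producing $z_0$ is a finite-dimensional computation on a $9$-dimensional space; the natural template from quantum information is a discrete analogue of a maximally entangled state, separated from $\tmin$ by a functional modelled on the partial transpose or on a Bell-type operator that exploits the symmetries of $\kite$ and $\bsquare$. Combining the four ingredients---easy direction, sandwiching characterisation, topological existence of sandwichings, and explicit tensor witness---completes the proof of Theorem~\ref{theorem:main}.
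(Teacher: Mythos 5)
Your overall architecture --- the easy direction, a sandwiching characterisation of non-classicality, and a single tensor pushed through the two sandwichings via functoriality of $\tmin$ and $\tmax$ --- coincides with the paper's, and your easy direction and the logic of the commutative diagram are correct. But both substantive steps are left as announced intentions, and the specific reductions you propose for them do not work. The central one: you cannot reduce the sandwiching to $\dim\C=3$. The outer map of a sandwiching must carry \emph{all} of $\C$ positively into $\CC(\bsquare)$, so a non-classical three-dimensional section of $\C$ supplies only the inner (kite) half; it gives no control over whether a complementary quotient of the full cone lands inside the blunt-square cone, and arranging a section and a quotient that are compatible (composition equal to the identity of $\R^3$) is precisely the difficulty. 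The paper works in the full dimension: it introduces the parameter $\delta(K)$ of the base, proves $\delta(K)\leq n-2$ for non-simplices via maximal faces, and applies an antipodal-pair lemma to a projection of $K$ --- that lemma, proved by the degree argument that the identity of $S^{n-1}$ is not homotopic to an even map, is where the topology actually enters. Your ``non-vanishing of a degree invariant on a moduli space of admissible pairs'' is not an argument: the space is not defined, and no reason is given why the invariant should be nonzero exactly for non-classical cones.

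Two further gaps. Fixing a single kite $\kite$ with two pairs of equal adjacent sides and a fixed inclusion $\iota$ is too rigid: the sandwiching one can actually construct produces a kite $\kite_\alpha$ whose parameters $\alpha\in(-1,1)^4$ depend on the cone, so the witness must be exhibited for every pair $(\alpha,\beta)$; in the paper the element $\omega_{\alpha,\beta}$ satisfies $f(\omega_{\alpha,\beta})=-R(\alpha)R(\beta)$ for a CHSH-type functional $f$, and the case of the wrong sign has to be rescued by a swap symmetry of the square. Finally, the witness itself is never produced, and the one point that cannot be waved at is why the square must be blunt: with the full square $[-1,1]^2$ the classical cone $\R_+^4$ admits the analogous factorisation (Remark~\ref{remark:why-blunt}), so the entire scheme is non-vacuous only because the CHSH bound is \emph{strict} on $\bsquare\times\bsquare$. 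As it stands, your text is a correct reading of the strategy plus a complete proof of the easy direction, but not a proof of the theorem.
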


Special cases of Theorem~\ref{theorem:main} were known prior to this paper. The easiest statement to prove is the fact that a pair of the form $(\C,\C^*)$ is nuclear if and only if $\C$ is classical; this was observed in~\cite{BarkerLoewy75,Tam77} and is also equivalent to the no-broadcasting theorem in GPTs~\cite{Barnum2007}. Another special case of Theorem~\ref{theorem:main} is the following result by Namioka and Phelps~\cite{NamiokaPhelps69}: if $\C_\square$ denotes a $3$-dimensional cone with $4$ extreme rays (all such cones are isomorphic) and $\C$ is any proper cone, then the pair $(\C,\C_\square)$ is nuclear if and only if $\C$ is classical. Note that, according to Theorem~\ref{theorem:main}, one can replace $\C_\square$ by any non-classical cone in the previous statement. 

We emphasise that the present paper is a study of nuclearity of \emph{pairs of cones}. According to conventional functional-analytic terminology, one may call a single proper cone $\C$ nuclear if $\C \tmin \C' =\C \tmax \C'$ for every proper cone $\C'$. However this notion of nuclearity is well-understood: it is a immediate consequence of the aforementioned result by Namioka and Phelps that a single cone is nuclear if and only if it is classical. Determining which are the nuclear pairs of cones is more challenging and is the point of our paper.

Prior to this work, partial results have been obtained recently in~\cite{ALP19} (see also~\cite{debruyn2020tensor}), where it is proved that (1) Theorem~\ref{theorem:main} holds if $\dim(\C_1)=\dim(\C_2)=3$, and (2) Theorem~\ref{theorem:main} holds if $\C_1$ and $\C_2$ are polyhedral cones.

\subsection{Consequences of Theorem~\ref{theorem:main}} 

\subsubsection{Cones of positive maps}

Let $V_1$, $V_2$ be finite-dimensional vector spaces. The tensor product $V_1 \otimes V_2$ is canonically isomorphic to the space $\gL(V_1^*,V_2)$ of linear operators from $V_1^*$ to $V_2$. Consider now proper cones $\C_1 \subset V_1$ and $\C_2 \subset V_2$. Under the isomorphism mentioned above, the maximal tensor product $\C_1 \tmax \C_2$ corresponds to the cone of maps $\Phi \in \gL(V_1^*,V_2)$ which are $(\C_1^*,\C_2)$-\emph{positive}, i.e.\ such that $\Phi(\C_1^*) \subset \C_2$. Similarly, the minimal tensor product $\C_1 \tmin \C_2$ corresponds to the cone generated by  $(\C_1^*,\C_2)$-positive maps of rank $1$. We obtain therefore the following restatement of Theorem~\ref{theorem:main} (to show the equivalence between both statements, remember that $\C_1$ is classical if and only if the dual cone $\C_1^*$ is classical).

\begin{corollary}
Let $\C_1$ and $\C_2$ be proper cones. The following are equivalent
\begin{enumerate}
    \item Every $(\C_1,\C_2)$-positive map is a sum of $(\C_1,\C_2)$-positive maps of rank $1$.
    \item Either $\C_1$ or $\C_2$ is classical.
\end{enumerate}
\end{corollary}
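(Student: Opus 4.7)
The plan is to recognise the Corollary as a direct reformulation of Theorem~\ref{theorem:main} via the canonical identification between tensor products and spaces of linear maps, as explained immediately before the Corollary in the excerpt.

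First I would apply the identification $V_1^* \otimes V_2 \cong \gL(V_1, V_2)$ sending a pure tensor $f \otimes v$ to the rank-one operator $x \mapsto f(x)\, v$. Repeating the argument of the paragraph preceding the Corollary, but with $\C_1$ replaced by $\C_1^*$ (and invoking the bipolar theorem $\C_1^{**}=\C_1$), one obtains that under this identification $\C_1^* \tmax \C_2$ corresponds to the cone of all $(\C_1,\C_2)$-positive maps in $\gL(V_1,V_2)$, while $\C_1^* \tmin \C_2$ corresponds to the cone generated by rank-one $(\C_1,\C_2)$-positive maps. The only subtle point is the characterisation of rank-one positive maps: if $\Phi = f\otimes v$ is a nonzero map satisfying $\Phi(\C_1)\subseteq \C_2$, then the scalar $f(x)$ must have constant sign on $\C_1$ (otherwise the salience of $\C_2$ would force $v=0$), and hence, up to replacing $(f,v)$ by $(-f,-v)$ (which leaves the tensor $f\otimes v$ unchanged), one may assume $f \in \C_1^*$ and $v \in \C_2$, making $\Phi$ a product element of $\C_1^* \tmin \C_2$.

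Granted these identifications, condition~(1) of the Corollary becomes precisely the equality $\C_1^* \tmin \C_2 = \C_1^* \tmax \C_2$, i.e.\ the nuclearity of the pair $(\C_1^*, \C_2)$. Applying Theorem~\ref{theorem:main} then yields the equivalent statement that $\C_1^*$ or $\C_2$ is classical. I would conclude using the elementary fact that classicality is preserved by duality: the cone $\R_+^d$ is self-dual under the standard inner product, so a cone is isomorphic to $\R_+^d$ if and only if its dual is. This allows replacing `$\C_1^*$ classical' by `$\C_1$ classical', which yields condition~(2). The only substantial input is Theorem~\ref{theorem:main} itself; the remainder is a routine unwinding of definitions, so I do not anticipate any real obstacle.
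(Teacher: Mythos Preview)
Your proposal is correct and follows exactly the route the paper indicates: the Corollary is simply a restatement of Theorem~\ref{theorem:main} under the canonical identification $V_1^*\otimes V_2\cong \gL(V_1,V_2)$, together with the observation that $\C_1$ is classical iff $\C_1^*$ is. The paper does not spell out the rank-one characterisation, so your sign argument (using salience of $\C_2$) is a welcome extra detail rather than a deviation.
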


\subsubsection{Matrix convex sets and operator systems}

As a consequence of Theorem~\ref{theorem:main}, we answer a question raised in~\cite{PSS18} about maximal and minimal matrix convex sets, or equivalently about maximal and minimal operator systems. We here use the language of operator systems and refer to~\cite[\S 7.1]{PSS18} for the translation in terms of matrix convex sets. As explained in~\cite{FNT17}, an (abstract) operator system in $d$ variables can be described by a sequence $(\C_n)_{n \geq 1}$ of proper cones, where $\C_n$ lives in the space $\Herm_n^d$ of $d$-tuples of $n \times n$ Hermitian matrices, with the property that for every $m \times n$ matrix $B$,
\[ (A_1,\dots,A_d) \in \C_n \Longrightarrow (BA_1B^\dagger, \dots, BA_dB^\dagger) \in \C_m. \] 
The usual definition of an operator system also requires to specify an order unit (=an interior point) for each cone $\C_n$. We ignore this condition since the choice of an order unit is irrelevant for our purposes (see~\cite[Remark 1.2(c)]{FNT17}). 
As it turns out, given a proper cone $\C \subset \R^d$, there is a minimal operator system $(\C_n^{\min})_{n \geq 1}$ and a maximal operator system $(\C_n^{\max})_{n \geq 1}$ satisfying the condition $\C_1^{\min} = \C_1^{\max} = \C$. This means that any operator system $(\C_n)_{n \geq 1}$ such that $\C_1=\C$ must satisfy $\C_n^{\min} \subset \C_n \subset \C_n^{\max}$. (We warn the reader that our use of the terminology `minimal' and `maximal' for tensor products, which follows~\cite{FNT17}, is reversed with respect to the common practice in functional analysis.) Moreover, the minimal and maximal operator systems can be described as
\begin{gather*} \C_n^{\min} = \C \tmin \PSD_n, \\
\C_n^{\max} = \C \tmax \PSD_n,
\end{gather*}
where $\PSD_n \subset \Herm_n$ is the cone of positive semidefinite matrices, and with the identification of $\R^d \otimes \Herm_n$ with $\Herm_n^d$. Our result is the following
\begin{corollary} \label{corollary:OS}
Let $\C$ be a proper cone, and $n \geq 2$. Then $\C_n^{\min} = \C_n^{\max}$ if and only if $\C$ is classical.
\end{corollary}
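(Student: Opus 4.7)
The plan is to derive Corollary~\ref{corollary:OS} as an essentially immediate consequence of Theorem~\ref{theorem:main}. By the explicit identifications recalled just before the statement, the equality $\C_n^{\min}=\C_n^{\max}$ is literally the statement that the pair $(\C,\PSD_n)$ is nuclear in the sense introduced in Section~2. Thus it suffices to invoke Theorem~\ref{theorem:main} for the pair $(\C_1,\C_2)=(\C,\PSD_n)$: nuclearity is equivalent to $\C$ being classical or $\PSD_n$ being classical.

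The only thing that needs verification, therefore, is that $\PSD_n$ is \emph{not} classical for $n\geq 2$. This is a standard and very short observation. The extreme rays of $\PSD_n$ are exactly the rays generated by rank-one positive semidefinite matrices $\ketbra{\psi}{\psi}$, which form a continuous family parametrised by the projective space of $\mathbf{C}^n$. On the other hand, a classical cone $\R_+^d$ has exactly $d$ extreme rays (the coordinate axes), hence only finitely many. For $n\geq 2$ the cone $\PSD_n$ has infinitely many extreme rays, so it cannot be isomorphic to $\R_+^d$ for any $d$.

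Putting the two ingredients together: $\C_n^{\min}=\C_n^{\max}$ iff $(\C,\PSD_n)$ is nuclear iff $\C$ or $\PSD_n$ is classical iff $\C$ is classical (the second alternative being excluded by the previous paragraph, since $n\geq 2$). No further work is required, and there is no genuine obstacle beyond correctly matching the definitions; the substance of the statement is entirely contained in Theorem~\ref{theorem:main}.
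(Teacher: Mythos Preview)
Your proposal is correct and follows essentially the same approach as the paper: the paper simply states that to deduce the corollary from Theorem~\ref{theorem:main} it suffices to notice that $\PSD_n$ is not classical for $n\geq 2$, and you have supplied exactly that argument (with a brief justification of the non-classicality via the infinitude of extreme rays, which the paper leaves implicit).
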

To deduce Corollary~\ref{corollary:OS} from Theorem~\ref{theorem:main}, it suffices to notice that the cone $\PSD_n$ is not classical for $n \geq 2$. Corollary~\ref{corollary:OS} improves on results from~\cite{PSS18} (where the same result was proved under the condition $\log (n)=\Omega(\dim(\C))$, answering in particular~\cite[Problem 4.3]{PSS18} in the optimal way, and from~\cite{HuberNetzer18} (where the same result was proved under the assumption that $\C$ is polyhedral).

\subsubsection{General probabilistic theories}

Our motivation for the study of entangleability of cones originates from the foundations of physics. Theorem~\ref{theorem:main} can be reformulated within the framework of general probabilistic theories (GPTs) as follows. 

\begin{result}
All pairs of non-classical GPTs can be entangled.
\end{result}

We state this result informally on purpose, and refer the interested reader to~\cite{ALPP}, where the terminology is introduced, and consequences for the foundations of physics are thoroughly discussed.

\subsubsection{More than 2 cones}

It is straightforward to define the maximal and minimal tensor product of $k \geq 2$ cones by extending formulae~\eqref{eq:def-tmin} and~\eqref{eq:def-tmax} to $k$-fold tensors. We obtain easily the following generalisation of Theorem~\ref{theorem:main}.

\begin{corollary}
Let $k \geq 2$ and $\C_1, \dots, \C_k$ be proper cones. Then the following are equivalent
\begin{enumerate}
    \item We have $\C_1 \tmin \cdots \tmin \C_k = \C_1 \tmax \cdots \tmax \C_k$,
    \item At most one among the cones $\C_1,\dots,\C_k$ is non-classical.
\end{enumerate}
\end{corollary}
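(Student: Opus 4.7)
The plan is to bootstrap from Theorem~\ref{theorem:main} using two auxiliary observations: associativity of both the minimal and the maximal tensor products, and the fact that classical cones are closed under $\tmin$.

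For the implication (2) $\Rightarrow$ (1), I would assume without loss of generality that $\C_1, \ldots, \C_{k-1}$ are classical. A direct computation shows that if $\C = \cone(e_i)$ and $\C' = \cone(f_j)$ are classical, then $\C \tmin \C' = \C \tmax \C' = \cone\{e_i \otimes f_j\}$, which is itself classical. Iterating this yields a single classical cone $\C := \C_1 \tmin \cdots \tmin \C_{k-1}$ equal to $\C_1 \tmax \cdots \tmax \C_{k-1}$. Applying Theorem~\ref{theorem:main} to the pair $(\C, \C_k)$ gives $\C \tmin \C_k = \C \tmax \C_k$; combined with associativity of $\tmin$ (immediate from the generator definition in \eqref{eq:def-tmin}) and of $\tmax$ (obtained by dualising and using the bipolar theorem), this closes the direction.

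For the converse (1) $\Rightarrow$ (2), I argue by contraposition: assume at least two of the cones, say $\C_1$ and $\C_2$, are non-classical. Theorem~\ref{theorem:main} produces some $z \in (\C_1 \tmax \C_2) \setminus (\C_1 \tmin \C_2)$. Choose interior points $u_i \in \inter(\C_i)$ for $i = 3, \ldots, k$ and set $\zeta := z \otimes u_3 \otimes \cdots \otimes u_k$. Testing $\zeta$ against any product functional $\varphi_1 \otimes \cdots \otimes \varphi_k$ with $\varphi_i \in \C_i^*$ gives $(\varphi_1 \otimes \varphi_2)(z) \cdot \prod_{i \geq 3} \varphi_i(u_i) \geq 0$, so $\zeta$ lies in $\C_1 \tmax \cdots \tmax \C_k$.

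The step that needs genuine work is showing $\zeta \notin \C_1 \tmin \cdots \tmin \C_k$. If, on the contrary, $\zeta = \sum_j x_1^{(j)} \otimes \cdots \otimes x_k^{(j)}$ with $x_i^{(j)} \in \C_i$, I would pick $\psi_i \in \inter(\C_i^*)$ for each $i \geq 3$, which ensures $\psi_i(u_i) > 0$, and apply the partial evaluation $\Id \otimes \Id \otimes \psi_3 \otimes \cdots \otimes \psi_k$ to both sides, obtaining
\[ \Bigl(\prod_{i=3}^k \psi_i(u_i)\Bigr)\, z \;=\; \sum_j \Bigl(\prod_{i=3}^k \psi_i(x_i^{(j)})\Bigr)\, x_1^{(j)} \otimes x_2^{(j)}. \]
The right-hand side belongs to $\C_1 \tmin \C_2$ because every scalar coefficient $\psi_i(x_i^{(j)})$ is nonnegative, and dividing by the positive number $\prod_{i \geq 3} \psi_i(u_i)$ contradicts the choice of $z$. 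Once Theorem~\ref{theorem:main} is available, the only real subtlety is ensuring that the evaluation functionals $\psi_i$ are strictly positive on $u_i$, which is guaranteed by placing them in the interior of the dual cone; everything else is bookkeeping, which is why this generalisation is stated as a corollary.
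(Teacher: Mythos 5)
Your proposal is correct and follows essentially the same route as the paper: the direction (2) $\Rightarrow$ (1) is the iterated use of the easy part of Theorem~\ref{theorem:main} (the paper phrases it as an induction on $k$), and the direction (1) $\Rightarrow$ (2) reduces to the two-cone case by partial evaluation with functionals $f_i \in \C_i^*$ and interior points $u_i \in \inter(\C_i)$, which is exactly the mechanism behind the paper's identities $(\Id \otimes f_3 \otimes \cdots \otimes f_k)(\C_1 \tmin \cdots \tmin \C_k) = \C_1 \tmin \C_2$ and its $\tmax$ analogue. Your explicit witness $\zeta = z \otimes u_3 \otimes \cdots \otimes u_k$ is just an unpacked form of the same argument, with all the relevant positivity checks correctly in place.
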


\begin{proof}
The implication (2) $\Longrightarrow$ (1) is by induction on $k$ using the easy part of Theorem~\ref{theorem:main}. Conversely, assuming (1), we prove that for every $i \neq j$, either $\C_i$ or $\C_j$ is classical. Without loss of generality, assume $(i,j)=(1,2)$. Fix nonzero elements $f_3 \in \C_3^*, \cdots, f_k \in \C_k^*$. Denoting by $V_i$ the ambient space where $\C_i$ lives, one checks that
\begin{gather*}
(\Id_{V_1 \otimes V_2} \otimes f_3 \otimes \cdots \otimes f_k)(\C_1 \tmin \C_2 \tmin \cdots \tmin \C_k) = \C_1 \tmin \C_2 \\
(\Id_{V_1 \otimes V_2} \otimes f_3 \otimes \cdots \otimes f_k)(\C_1 \tmax \C_2 \tmax \cdots \tmax \C_k) = \C_1 \tmax \C_2.
\end{gather*}
Our hypothesis implies that $(\C_1,\C_2)$ is nuclear, and the result follows by Theorem~\ref{theorem:main}.
\end{proof}

\subsection{Sketch of proof and organisation of the paper}

Before we describe our argument, we present a short overview of it in which we use the language of quantum information theory, as this may be profitable to some of our readers; others may skip this paragraph. Given two non-classical cones~$\C_1$ and $\C_2$, we need to construct a vector $\omega$ in $\C_1 \tmax \C_2$ which is \emph{entangled}, i.e.\ not in $\C_1 \tmin \C_2$. The simplest non-classical cones are the 3-dimensional cones generated by 4 points in convex position, which we call kites. Our vector $\omega$ is constructed from a pair of kites embedded in $\C_i$ via an  explicit formula reminiscent of Popescu--Rohrlich boxes~\cite{PR-boxes}. In order to certify that $\omega$ is entangled, we show that it violates an inequality based on the simplest of all Bell inequalities: the Clauser--Horne--Shimony--Holt (CHSH) inequality~\cite{CHSH}. 
The CHSH inequality involves two binary measurements which we encode by mapping each cone $\C_i$ inside the cone over a square. As we need to investigate the equality case in the CHSH inequality, we consider instead a blunt version of the square. For this strategy to be successful, each of the cones $\C_i$ must be related to both a kite and a blunt square in a compatible way. We conclude by showing that this situation occurs for every non-classical cone.

Before delving into the details of our argument, we need to fix some terminology. We start by defining particular planar convex shapes. First, the \emph{blunt square} is defined to be a square minus its vertices
\[ \bsquare \coloneqq [-1,1]^2 \setminus \{-1,1\}^2. \]
The fact that we look at the blunt square instead of the usual square is critical to our arguments, as the example given in Remark~\ref{remark:why-blunt} will show.
\begin{figure}[htbp] \begin{center}
\begin{tikzpicture}[scale=2]
	\coordinate (a) at (1,1);
	\coordinate (b) at (1,-1);
	\coordinate (c) at (-1,-1);
	\coordinate (d) at (-1,1) ;
	\coordinate (A) at (0.7,1);
	\coordinate (B) at (1,0.9);
	\coordinate (C) at (-0.2,-1);
	\coordinate (D) at (-1,0.5) ;
	\draw[fill=gray!30] (a)--(b)--(c)--(d)--(a) ;
	\draw[fill=gray!90] (A)--(B)--(C)--(D)--(A) ;
	\draw[gray!0] (a) node {$\bullet$};
        \draw[gray!0] (b) node {$\bullet$};
        \draw[gray!0] (c) node {$\bullet$};
        \draw[gray!0] (d) node {$\bullet$};
	\draw (-0.1,-0.1) node {$\kite_{\alpha}$};
	\draw (0.6,-0.5) node {$\bsquare$};
        \end{tikzpicture} \end{center}
\caption{A kite inside the blunt square}
\end{figure}
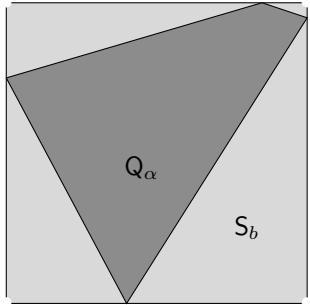
Note that $\bsquare$ is neither closed nor open, and that the same is true for the cone $\CC(\bsquare)$. We then define a \emph{kite} to be a convex body of the form
\[ \kite_\alpha = \conv \{ (1,\alpha_1), (\alpha_2,1), (-1,\alpha_3), (\alpha_4,-1) \} , \]
where $\alpha=(\alpha_1,\dots,\alpha_4) \in (-1,1)^4$. Note that any kite is contained in the blunt square.

We now introduce the main geometric tool used in the proof of Theorem~\ref{theorem:main}. Let $\C$ be a proper cone in a finite-dimensional vector space $V$. We say that $\C$ \emph{admits a kite-square sandwiching} if there is a kite $\kite_\alpha$ and two linear maps $\Psi : \R^3 \to V$, $\Phi : V \to \R^3$ such that $\Phi \circ \Psi = \Id$, $\Psi(\CC(\kite_\alpha)) \subset \C$ and $\Phi(\C) \subset \CC(\bsquare)$. We prove that this property is shared by all non-classical proper cones (note that any non-classical cone $\C$ must satisfy $\dim(\C)\geq 3$). 

\begin{theorem} \label{theorem:KS}
Let $\C$ be a proper cone. Then $\C$ is non-classical if and only if it admits a kite-square sandwiching.
\end{theorem}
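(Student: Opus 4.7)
For the main direction (non-classical $\Rightarrow$ sandwiching), the plan unfolds in three steps. First, since $\C$ is non-classical, its base $K$ is not a simplex, so its extreme points are affinely dependent. Taking a \emph{minimal} affine dependency and splitting it by sign, I obtain four extreme points $x_1,x_2,x_3,x_4$ of $K$ lying in a common $2$-plane $\pi$ with $[x_1,x_3]\cap[x_2,x_4]\neq\emptyset$: four points in convex position. Let $W=\mathspan(x_1,\ldots,x_4)$, a $3$-dimensional linear subspace of $V$. Second, I exploit the fact that any convex quadrilateral is affinely equivalent to a $\kite_\alpha$ for some $\alpha\in(-1,1)^4$ (choose an affine isomorphism $\pi\cong\R^2$ sending each $x_j$ to a distinct side of $[-1,1]^2$, avoiding corners), select such $\alpha$, and linearize to obtain $\Psi\colon\R^3\to V$ with $\Psi(\CC(\kite_\alpha))=\cone(x_1,\ldots,x_4)\subset\C$.

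Third, and most crucial, I construct $\Phi=(\phi_1,\phi_2,\phi_3)\colon V\to\R^3$. The identity $\Phi\Psi=\Id$ fixes the restrictions $\phi_i|_W$, and $\Phi(\C)\subset\CC(\bsquare)$ is equivalent to the four memberships $\phi_3\pm\phi_1,\ \phi_3\pm\phi_2\in\C^*$, together with the blunt condition that no ray of $\C$ maps to a corner ray of $\CC([-1,1]^2)$. By construction, each prescribed restriction $(\phi_3\pm\phi_j)|_W$ vanishes on exactly one of the $x_i$ (for instance, $(\phi_3+\phi_1)(x_3)=0$), so each candidate must act as a supporting functional of $\C$ at the corresponding extreme ray through $x_i$. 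The task is to extend $\phi_1,\phi_2,\phi_3$ from $W$ to $V$ so that all four positivity conditions propagate to $\C$. I would attempt this via a Hahn--Banach-style argument rooted in supporting hyperplanes of $\C$ at the four extreme rays through the $x_j$, enforcing the blunt condition afterwards by a generic perturbation that preserves the four cone memberships.

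The main obstacle is this simultaneous positive extension: Hahn--Banach provides individual positive extensions, but coordinating four of them under a three-parameter family of unknowns $(\phi_1,\phi_2,\phi_3)$ is delicate. A useful reformulation is to pass through the quotient $V/\ker\Phi\cong\R^3$ once a candidate $\Phi$ is fixed, turning the problem into a planar convex-geometric question: squeeze the shadow of $K$ into the blunt square while pinning the kite's four vertices to the four sides of $[-1,1]^2$. For the converse direction, if $\C=\R_+^d$ is classical and a sandwiching exists, then each $\Phi(e_i)\in\CC(\bsquare)$ and $\cone(\Phi(e_1),\ldots,\Phi(e_d))\supset\CC(\kite_\alpha)$; combinatorially analyzing how such a polyhedral cone can reach all four sides of $[-1,1]^2$ without touching a corner (no convex combination of points of $\bsquare$ hits a corner), combined with $\Phi\Psi=\Id$ and $\Psi(\CC(\kite_\alpha))\subset\R_+^d$, should yield a positivity contradiction.
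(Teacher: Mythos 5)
Your plan has two genuine gaps, and the first step is already false as stated. A minimal affine dependency among the extreme points of a non-simplex $K$ need not have size $4$: take for $K\subset\R^3$ the convex hull of five points in general position realising a $2$--$3$ Radon partition (a segment crossing a triangle). All five points are extreme, the space of affine dependencies is one-dimensional and involves all five points, and no four of the extreme points are coplanar -- so there is no convex quadrilateral with vertices at extreme points, and your $x_1,\dots,x_4$ do not exist. This is why the paper's construction never asks the kite's image vertices to be extreme: it uses an exposed point $v_1$, a point $v_2$ in a face $F$ of minimal dimension $d=\delta(K)$, and two points $y_1,y_2$ that are merely preimages of an antipodal pair under a projection. (Note also that finding \emph{some} inscribed convex quadrilateral is cheap -- any planar section through an interior point contains one -- so non-classicality must enter through the square map, which your outline does not exploit.)

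The second and decisive gap is exactly where you stop: the construction of $\Phi$ with $\Phi(\C)\subset\CC(\bsquare)$ and $\Phi\circ\Psi=\Id$. Hahn--Banach produces individual positive extensions, but your four candidates are not independent: they must satisfy the exact identity $(\phi_3+\phi_1)+(\phi_3-\phi_1)=(\phi_3+\phi_2)+(\phi_3-\phi_2)$ together with the normalisations forced by $\Phi\circ\Psi=\Id$, i.e.\ you need four supporting functionals of $\C$ at four prescribed points obeying an exact linear relation -- a simultaneous extension problem that is essentially the dual of the statement being proved, and which can fail for a badly chosen quadruple of points. Nor can the blunt condition be enforced by a ``generic perturbation'': Remark~\ref{remark:why-blunt} shows that with the full square even $\R_+^4$ admits such a factorisation, so whether the corners are hit is precisely the dividing line between classical and non-classical cones; it must be built into the choice of the data, not restored afterwards, and a perturbation that keeps the four kite vertices pinned and $\Phi(\C)$ inside the square cone has essentially no freedom left. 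The paper resolves this by choosing the four points through Proposition~\ref{proposition:deltaK} ($\delta(K)\leq n-2$ for non-simplices), a projective normalisation (Lemma~\ref{lemma:send-to-infinity}), the pyramid identification of the section $K\cap V$ (Claim~\ref{claim:pyramid}), and the degree-theoretic antipodality Lemma~\ref{lemma:antipodal} producing the functional $\ell$; bluntness then holds because the only points of $K$ with $f\in\{0,1\}$ lie in $V$, where $\ell\circ\pi$ vanishes, and $\mu\in(0,1)$. Your converse direction is likewise only a sketch: a cone generated by points of $\bsquare$ can perfectly well contain $\CC(\kite_\alpha)$ while avoiding the corners, so no contradiction comes from that geometry alone; the paper instead uses the decomposition (Riesz) property of $\R_+^d$ applied to $\rayS_1+\rayS_3=\rayS_2+\rayS_4$ together with the facial structure $F_i\cap F_j\cap\CC(\bsquare)=\{0\}$.
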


Our second step is to deduce entangleability from the existence of kite-square sandwichings.

\begin{theorem} \label{theorem:KS-entangleable}
Let $\C_1$ and $\C_2$ be proper cones, both admitting a kite-square sandwiching. Then $(\C_1,\C_2)$ is entangleable.
\end{theorem}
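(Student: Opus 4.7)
\emph{Reduction via sandwichings.} Let $(\Psi_i,\Phi_i,\kite_{\alpha^{(i)}})$ be a sandwiching of $\C_i$, so $\Phi_i\circ\Psi_i=\Id_{\R^3}$, $\Psi_i(\CC(\kite_{\alpha^{(i)}}))\subset\C_i$, and $\Phi_i(\C_i)\subset\CC(\bsquare)$. Since tensor products of positive maps are positive for both $\tmin$ and $\tmax$, $\Psi_1\otimes\Psi_2$ sends $\CC(\kite_{\alpha^{(1)}})\tmax\CC(\kite_{\alpha^{(2)}})$ into $\C_1\tmax\C_2$ and $\Phi_1\otimes\Phi_2$ sends $\C_1\tmin\C_2$ into $\CC(\bsquare)\tmin\CC(\bsquare)$; moreover $(\Phi_1\otimes\Phi_2)\circ(\Psi_1\otimes\Psi_2)=\Id$. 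It therefore suffices to produce $\omega\in\R^3\otimes\R^3$ lying in $\CC(\kite_{\alpha^{(1)}})\tmax\CC(\kite_{\alpha^{(2)}})$ but not in $\CC(\bsquare)\tmin\CC(\bsquare)$: the image $(\Psi_1\otimes\Psi_2)(\omega)\in\C_1\tmax\C_2$ then cannot lie in $\C_1\tmin\C_2$, for otherwise $\Phi_1\otimes\Phi_2$ would send it back to $\omega\in\CC(\bsquare)\tmin\CC(\bsquare)$.

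\emph{A strict CHSH separator on $\bsquare$.} Choose coordinates $(x,y,t)$ on $\R^3$ so that $\CC(\bsquare)$ sits over $\bsquare$ in the slice $t=1$, and consider
\[\chi(z)=2\,z_{tt}-z_{xx}-z_{xy}-z_{yx}+z_{yy}.\]
For a product $a\otimes b$ of nonzero elements $a=(a_1,a_2,s),b=(b_1,b_2,u)\in\CC(\bsquare)$ (so $s,u>0$ and $(a_1/s,a_2/s),(b_1/u,b_2/u)\in\bsquare$),
\[\chi(a\otimes b)=su\Bigl(2-\tfrac{a_1}{s}\cdot\tfrac{b_1+b_2}{u}-\tfrac{a_2}{s}\cdot\tfrac{b_1-b_2}{u}\Bigr),\]
which is strictly positive because the planar CHSH estimate $|p_1(q_1+q_2)+p_2(q_1-q_2)|\leq 2\max(|q_1|,|q_2|)\leq 2$ on $[-1,1]^2\times[-1,1]^2$ reaches $2$ only when $(p_1,p_2)$ is a corner of $[-1,1]^2$, which $\bsquare$ excludes. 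By linearity $\chi>0$ on every nonzero element of $\CC(\bsquare)\tmin\CC(\bsquare)$, so any $\omega\neq 0$ with $\chi(\omega)\leq 0$ lies outside this cone.

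\emph{PR-box construction.} I would take $\omega$ of Popescu--Rohrlich form
\[\omega=t\otimes t+\sum_{i,j\in\{x,y\}}\mu_{ij}\,i\otimes j,\]
with the CHSH-violating sign pattern $\mu_{xx},\mu_{xy},\mu_{yx}>0$, $\mu_{yy}<0$, and the four coefficients tuned to the kite data $(\alpha^{(1)},\alpha^{(2)})$. Viewed as a linear map $\R^3\to\R^3$, $\omega$ must send each of the four extreme rays of $\CC(\kite_{\alpha^{(1)}})^*$ (the facet-normals of the kite cone) into $\CC(\kite_{\alpha^{(2)}})$, yielding $16$ polynomial positivity conditions to be satisfied simultaneously with $\chi(\omega)\leq 0$ and $\omega\neq 0$. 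For the symmetric diamond $\kite_{(0,0,0,0)}$, the uniform choice $|\mu_{ij}|=1/2$ maps the four dual extreme rays exactly onto the diamond's vertices and gives $\chi(\omega)=0$, which already suffices by the previous step.

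\emph{Main difficulty.} The principal obstacle is finding a uniform closed-form $\omega(\alpha^{(1)},\alpha^{(2)})$ valid for arbitrary kites: skewed or elongated parameters rule out naively symmetric rescalings, so a careful ansatz keyed to the edge-normal structure of each kite cone is likely required. I expect the $16$ positivity conditions to reduce to polynomial inequalities in the eight parameters $\alpha^{(i)}_k$, each strict by $|\alpha^{(i)}_k|<1$ in parallel with the strictness of CHSH on $\bsquare$. Once such an $\omega$ is in hand, the reduction above and the CHSH witness combine mechanically to produce the desired element of $\C_1\tmax\C_2\setminus\C_1\tmin\C_2$.
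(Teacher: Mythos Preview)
Your reduction step and the strict CHSH witness $\chi$ are both correct and match the paper exactly (the paper's functional $f$ is $-\chi$, and its Lemma on $\CC(\bsquare)\tmin\CC(\bsquare)$ is your strictness argument on the blunt square). The genuine gap is precisely where you flag it: you have only verified the symmetric case $\alpha^{(1)}=\alpha^{(2)}=(0,0,0,0)$, and for general kites you offer only an ansatz and an expectation. In fact your PR-box ansatz $\omega = t\otimes t + \sum_{i,j}\mu_{ij}\, i\otimes j$ is quite restrictive---it lives in a $5$-dimensional slice of $\R^3\otimes\R^3$ with all $x\!\otimes\! t$, $t\!\otimes\! y$, \dots\ components forced to vanish---and there is no a priori reason the $16$ kite positivity inequalities together with $\chi(\omega)\leq 0$ admit a solution in that slice for every pair of skewed kites. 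The paper's $\omega_{\alpha,\beta}$ is a generic $3\times 3$ tensor with no vanishing entries.

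The paper closes this gap with an idea that sidesteps the positivity system entirely. It proves an algebraic lemma: whenever $T_1+T_3=T_2+T_4$ holds in a cone $\C$ and $U_1+U_3=U_2+U_4$ in $\C'$, the tensor
\[
T_1\otimes U_2 - T_2\otimes U_2 + T_2\otimes U_1 + T_3\otimes U_3
\]
automatically lies in $\C\tmax\C'$ (the verification is a two-line CHSH-type estimate on scalars). The four extreme ray generators of \emph{any} kite can be positively rescaled so as to satisfy such an additive relation---the coefficients come from writing the intersection of the kite's diagonals as a convex combination of opposite vertices---and this produces $\omega_{\alpha,\beta}$ uniformly in the parameters. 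A brute-force expansion then yields $\chi(\omega_{\alpha,\beta}) = R(\alpha)R(\beta)$ for an explicit degree-$3$ polynomial $R$; when this product is positive, the coordinate swap $\sigma:(x,y;t)\mapsto(y,x;t)$ sends $\kite_\alpha$ to $\kite_{\bar\alpha}$ with $R(\bar\alpha)=-R(\alpha)$, so one of $\omega_{\alpha,\beta}$ or $(\sigma\otimes\Id)(\omega_{\bar\alpha,\beta})$ always works. The missing ingredient in your argument is this additive-relation trick for manufacturing max-tensor elements, together with the symmetry fallback.
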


It is then immediate to prove Theorem~\ref{theorem:main}. The fact that $(\C_1,\C_2)$ is nuclear whenever either $\C_1$ or $\C_2$ is classical is well known and is the easy direction; for the benefit of the reader we include a proof below. The fact that $(\C_1,\C_2)$ is entangleable whenever both $\C_1$ and $\C_2$ are non-classical is an immediate consequence of Theorems~\ref{theorem:KS} and~\ref{theorem:KS-entangleable}.

\begin{proof}[Proof of the easy direction in Theorem~\ref{theorem:main}]
Let $\C_1 \subset V_1$ and $\C_2 \subset V_2$ be proper cones, and assume that one of them (say, $\C_1$) is classical. Let $(e_i)$ be a basis of $V_1$ such that $\C_1 = \cone (e_i)$. Consider the dual basis $(e^*_i)$ of $V_1^*$, which satisfies $e_i^*(e_j)=\delta_{i,j}$.
Decompose an arbitrary $z \in \C_1 \tmax \C_2$ as $z = \sum e_i \otimes x_i$, where $x_i\in V_2$. By definition of maximal tensor product, for every $f \in \C_2^*$ we have that $0 \leq (e_i^* \otimes f)(z) = f(x_i)$ for all $i$. This shows that $x_i \in \C_2^{**} = \C_2$. Hence, $z \in \C_1 \tmin \C_2$ and consequently $\C_1\tmax \C_2 = \C_1 \tmin \C_2$.
\end{proof}

Our paper is organised as follows. Section~\ref{section:KS-entagleable} is devoted to the proof of Theorem~\ref{theorem:KS-entangleable}. It is based on explicit computations on kites and blunt squares. Section~\ref{section:lemmas} gathers several lemmata which are used in the proof of Theorem~\ref{theorem:KS}. The proof of Theorem~\ref{theorem:KS} is relegated to Section~\ref{section:main-proof}. 

\section{Proof of Theorem~\ref{theorem:KS-entangleable}} \label{section:KS-entagleable}

In this section we prove that any two cones that both admit a kite-square sandwiching form an entangleable pair. We first observe that the maximal tensor product of two cones over kites sticks out from the minimal tensor product of cones over the blunt square.

\begin{proposition} \label{prop:g-kite}
Fix $\alpha$ and $\beta \in (-1,1)^4$. Then $\CC(\kite_\alpha) \tmax \CC(\kite_\beta)$ is not a subset of 
$\CC(\bsquare) \tmin \CC(\bsquare)$. In other words, there is $\omega \in \CC(\kite_\alpha) \tmax \CC(\kite_\beta)$ such that $\omega \not \in \CC(\bsquare) \tmin \CC(\bsquare)$.
\end{proposition}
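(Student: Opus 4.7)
The strategy follows the CHSH paradigm: exhibit a Popescu--Rohrlich-like tensor in the max tensor product of the two kites, and separate it from the min tensor product of the two blunt squares using a CHSH-type bilinear witness.

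\emph{Step 1 (Separating functional).} Working in coordinates $(x_1, x_2, x_3)$ on $\R^3$ with $x_3$ the normalisation axis, define
\[ F(x, y) := 2 x_3 y_3 - x_1 y_1 - x_1 y_2 - x_2 y_1 + x_2 y_2. \]
The key claim is that $F(x, y) > 0$ for every pair of nonzero $x, y \in \CC(\bsquare)$. After normalising $x_3 = y_3 = 1$, so that $(x_1, x_2), (y_1, y_2) \in \bsquare$, one has $F(x, y) = 2 - [x_1(y_1 + y_2) + x_2(y_1 - y_2)]$, and the bracket is bounded above by $|y_1 + y_2| + |y_1 - y_2| = 2 \max(|y_1|, |y_2|) \leq 2$. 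A short case analysis shows that every equality case forces either $(x_1, x_2)$ or $(y_1, y_2)$ into a corner $\{\pm 1\}^2$; since corners are excluded from $\bsquare$, the strict inequality $F > 0$ holds on $\bsquare \times \bsquare$. By bilinearity, $F(z) > 0$ for every nonzero $z \in \CC(\bsquare) \tmin \CC(\bsquare)$.

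\emph{Step 2 (PR-box-like candidate).} Construct $\omega = \omega(\alpha, \beta) \in \R^3 \otimes \R^3$ explicitly in the spirit of the Popescu--Rohrlich box. For the diamond case $\alpha = \beta = 0$ one checks that
\[ \omega_0 := e_3 \otimes e_3 + \tfrac12\bigl(e_1 \otimes e_1 + e_1 \otimes e_2 + e_2 \otimes e_1 - e_2 \otimes e_2\bigr) \]
lies in $\CC(\kite_0) \tmax \CC(\kite_0)$ and satisfies $F(\omega_0) = 0$. For general $\alpha, \beta$ the correlation block of $\omega$ must be rescaled and possibly supplemented with $\alpha, \beta$-dependent marginal terms so that the tensor continues to fall into $\CC(\kite_\alpha) \tmax \CC(\kite_\beta)$ while preserving $F(\omega) \leq 0$.

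\emph{Step 3 (Membership).} The dual $\CC(\kite_\alpha)^*$ has exactly four extreme rays $f_{12}, f_{23}, f_{34}, f_{41}$, one per edge of the kite; for instance $f_{12} = (1 - \alpha_1\alpha_2)\, e_3^* - (1 - \alpha_1)\, e_1^* - (1 - \alpha_2)\, e_2^*$, and the other three follow by cyclic analogues. By duality, $\omega \in \CC(\kite_\alpha) \tmax \CC(\kite_\beta)$ iff the sixteen inequalities $(f_{ij} \otimes g_{kl})(\omega) \geq 0$ hold. Each such inequality is a polynomial in $\alpha, \beta$, and one verifies them by direct computation.

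\emph{Step 4 (Conclusion).} Together with $F(\omega) \leq 0$, Step 1 implies $\omega \notin \CC(\bsquare) \tmin \CC(\bsquare)$, while Step 3 places $\omega$ in $\CC(\kite_\alpha) \tmax \CC(\kite_\beta)$.

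\emph{Main obstacle.} The difficulty is concentrated in Step 2: finding a single explicit $\omega(\alpha, \beta)$ that works uniformly across all $\alpha, \beta \in (-1, 1)^4$. The symmetric diamond ansatz above already \emph{fails} for sufficiently asymmetric kites, where the sixteen extremality constraints become tighter in different directions. Resolving this likely requires admitting nonzero marginal coordinates in $\omega$, or alternatively choosing a CHSH-type functional $F$ adapted to $\alpha, \beta$ from a family of valid witnesses on $\bsquare \times \bsquare$. The heart of the proof is showing that the resulting polynomial feasibility problem can be solved for every kite pair, exploiting the fact that kites, being strictly interior to the blunt square in a controlled way, always leave enough CHSH slack.
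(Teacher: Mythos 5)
Your Step 1 is correct and coincides with the paper's witness (your $F$ is the negative of the functional $f(m)=m_{11}+m_{12}+m_{21}-m_{22}-2m_{33}$ used there, and the strictness analysis on the blunt square is the same). But the proof has a genuine gap exactly where you place the ``main obstacle'': Steps 2--3 never actually produce an element $\omega(\alpha,\beta)$ of $\CC(\kite_\alpha)\tmax\CC(\kite_\beta)$ with $F(\omega)\leq 0$ for general $\alpha,\beta\in(-1,1)^4$. You exhibit a candidate only for $\alpha=\beta=0$, concede that the symmetric ansatz fails for asymmetric kites, and replace the construction by a programme (``rescale the correlation block, add marginal terms, verify sixteen polynomial inequalities'') whose feasibility you do not establish. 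Since the whole content of the proposition is the uniform statement over all kites, the argument as written does not prove it.

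For comparison, the paper closes this gap with two ideas you are missing. First, instead of checking the sixteen dual inequalities directly, it rescales the four extreme-ray generators of $\CC(\kite_\alpha)$ by explicit positive coefficients (read off from the intersection point of the kite's diagonals) so that $\rayS_1+\rayS_3=\rayS_2+\rayS_4$, and proves a general lemma: for any cones, if $\rayS_1+\rayS_3=\rayS_2+\rayS_4$ and $\rayT_1+\rayT_3=\rayT_2+\rayT_4$, then the PR-box-type element $\omega=\rayS_1\otimes\rayT_2-\rayS_2\otimes\rayT_2+\rayS_2\otimes\rayT_1+\rayS_3\otimes\rayT_3$ lies in the maximal tensor product (a one-line CHSH-style estimate on $(\varphi\otimes\varphi')(\omega)$). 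This sidesteps the polynomial feasibility problem entirely. Second, the sign issue you worry about is real: a brute-force computation gives $f(\omega_{\alpha,\beta})=-R(\alpha)R(\beta)$ for an explicit quantity $R$, and this can have the wrong sign. The paper fixes it not by adapting the witness but by the symmetry $\sigma(x,y;t)=(y,x;t)$, which preserves $\CC(\bsquare)$, sends $\kite_\alpha$ to $\kite_{\overline{\alpha}}$ with $R(\overline{\alpha})=-R(\alpha)$, and lets one transport the counterexample back via $\sigma\otimes\Id$. Without these (or equivalent) ingredients, your proposal remains a plan rather than a proof.
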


Assuming Proposition~\ref{prop:g-kite} for the moment, it is easy to deduce Theorem~\ref{theorem:KS-entangleable}.

\begin{proof}[Proof of Theorem~\ref{theorem:KS-entangleable}]
Let $\Phi_1$, $\Psi_1$, $(\alpha_i)_{1 \leq i \leq 4}$ and $\Phi_2$, $\Psi_2$, $(\beta_i)_{1 \leq i \leq 4}$ as in the definition of a kite-square sandwiching, for $\C_1$ and $\C_2$ respectively. Then, for every $z \in \CC(\kite_\alpha) \tmax \CC(\kite_\beta)$, we have that $(\Psi_1 \otimes \Psi_2)(z) \in \C_1 \tmax \C_2$ (this is because $f \in \C_1^*$ implies that $f \circ \Psi_1 \in \CC(\kite_\alpha)^*$, and analogously for $\C_2$). If we assume by contradiction that $(\C_1,\C_2)$ is nuclear, then there is a decomposition 
\[ (\Psi_1 \otimes \Psi_2)(z) = \sum x_k \otimes y_k \]
with $x_k \in \C_1$, $y_k \in \C_2$, and therefore
\[ z = (\Phi_1 \otimes \Phi_2)(\Psi_1 \otimes \Psi_2)(z) = \sum \Phi_1(x_k) \otimes \Phi_2(y_k) \in \CC(\bsquare) \tmin \CC(\bsquare), \]
contradicting Proposition~\ref{prop:g-kite}.
\end{proof}

The proof of Proposition~\ref{prop:g-kite} relies on the following lemma, which is used to construct non-trivial elements in the maximal tensor product of two cones.

\begin{lemma} \label{lemma:omega-tmax}
Let $\C$, $\C'$ be two proper cones. Let $\rayS_1$, $\rayS_2$, $\rayS_3$, $\rayS_4$ be elements of $\C$ satisfying $\rayS_1+\rayS_3=\rayS_2+\rayS_4$, and $\rayT_1$, $\rayT_2$, $\rayT_3$, $\rayT_4$ be elements of $\C'$ satisfying $\rayT_1+\rayT_3=\rayT_2+\rayT_4$. Then
\begin{equation} \label{eq:omega} \omega \coloneqq \rayS_1 \otimes \rayT_2 - \rayS_2 \otimes \rayT_2 + \rayS_2 \otimes \rayT_1 + \rayS_3 \otimes \rayT_3 \in \C \tmax \C'. \end{equation}
\end{lemma}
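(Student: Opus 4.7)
My plan is to work entirely at the level of the dual cones. By the bipolar theorem and the definition~\eqref{eq:def-tmax}, membership of $\omega$ in $\C \tmax \C'$ is equivalent to the scalar inequality $(f \otimes g)(\omega) \geq 0$ holding for every $f \in \C^*$ and every $g \in (\C')^*$. So I would introduce the nonnegative numbers $a_i \coloneqq f(\rayS_i)$ and $b_i \coloneqq g(\rayT_i)$, which automatically satisfy $a_1 + a_3 = a_2 + a_4$ and $b_1 + b_3 = b_2 + b_4$ by the hypotheses on the $\rayS_i$ and $\rayT_i$. After applying $f \otimes g$ to the formula~\eqref{eq:omega} for $\omega$, the whole statement collapses to the following purely elementary claim: whenever $a_1,a_2,a_3,a_4,b_1,b_2,b_3,b_4 \geq 0$ satisfy the two ``parallelogram'' relations above, one has
\[
(a_1 - a_2)\,b_2 + a_2\,b_1 + a_3\,b_3 \;\geq\; 0.
\]

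To prove this elementary inequality I would split into two cases according to the sign of $a_1 - a_2$. If $a_1 \geq a_2$, all three summands are manifestly nonnegative and there is nothing to do. Otherwise set $p \coloneqq a_2 - a_1 > 0$; the relation $a_1 + a_3 = a_2 + a_4$ gives the twin identity $a_3 - a_4 = p$, so one can substitute $a_2 = a_1 + p$ and $a_3 = a_4 + p$ to rewrite the expression as
\[
-p\,b_2 + (a_1 + p)\,b_1 + (a_4 + p)\,b_3 \;=\; a_1 b_1 + a_4 b_3 + p\,(b_1 + b_3 - b_2),
\]
and the parallelogram relation on the $b_i$'s turns the last parenthesis into $b_4 \geq 0$. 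Every summand on the right is then nonnegative, and we are done.

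I do not expect a serious obstacle here: the dual reformulation reduces everything to inequalities among eight nonnegative numbers with two linear constraints, and the key algebraic move is simply to trade $a_2,a_3$ for $a_1,a_4$ (or symmetrically $b_2,b_3$ for $b_1,b_4$) so that the mixing term~$p$ multiplies precisely $b_1 + b_3 - b_2 = b_4$. The only minor subtlety is noticing that an innocuous single case split on the sign of $a_1 - a_2$ is needed, because no manifestly nonnegative decomposition of $\omega$ itself into pure tensors $x \otimes y$ with $x \in \C$, $y \in \C'$ is available (indeed Proposition~\ref{prop:g-kite} will show that in general no such decomposition exists, which is the whole point of the lemma).
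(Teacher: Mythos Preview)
Your proof is correct and follows essentially the same approach as the paper: reduce via the definition of $\tmax$ to showing $(f\otimes g)(\omega)\geq 0$, set $a_i=f(\rayS_i)$, $b_i=g(\rayT_i)$, and verify the resulting scalar inequality using the constraints $a_1+a_3=a_2+a_4$, $b_1+b_3=b_2+b_4$. The only cosmetic difference is in the final algebra: the paper rewrites the expression symmetrically as $a_1b_1+a_3b_3-(a_2-a_1)(b_2-b_1)$ and bounds the cross term by $\max(a_1b_1,a_3b_3)$, whereas you do a case split on the sign of $a_1-a_2$ and substitute to get the manifestly nonnegative form $a_1b_1+a_4b_3+p\,b_4$; both arguments are equally short and valid.
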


\begin{proof}
By definition of the maximal tensor product, we need to check that for every $\varphi \in \C^*$, $\varphi' \in (\C')^*$, we have that $(\varphi \otimes \varphi')(\omega) \geq 0$. Set $x_i=\varphi(\rayS_i)$ and $y_i=\varphi'(\rayT_i)$, so that $x_i \geq 0$, $y_i \geq 0$. We compute
\begin{align*}
(\varphi \otimes \varphi')(\omega) & =  x_1y_2-x_2y_2+x_2y_1+x_3y_3 \\
& =  x_1y_1+x_3y_3 - (x_2-x_1)(y_2-y_1).
\end{align*}
Since $-x_1 \leq x_2-x_1  \leq x_3$ and $-y_1 \leq y_2-y_1 \leq y_3$, we have that $(x_2-x_1)(y_2-y_1) \leq \max(x_1y_1,x_3y_3)$, implying that $(\varphi \otimes \varphi')(\omega) \geq 0$. This completes the proof.
\end{proof}

\begin{proof}[Proof of Proposition~\ref{prop:g-kite}]
Let $\alpha$, $\beta \in (-1,1)^4$. The extreme rays of $\CC(\kite_\alpha)$ and $\CC(\kite_\beta)$ are generated respectively by the vectors
\begin{gather*}
\rays_1 = (1,\alpha_1 \,;\, 1), \ \rays_2=(\alpha_2,1\,;\, 1),\  \rays_3=(-1,\alpha_3\,;\,1),\ \rays_4=(\alpha_4,-1\,;\,1) , \\
 \rayt_1 = (1,\beta_1 \,;\, 1), \ \rayt_2=(\beta_2,1\,;\, 1),\  \rayt_3=(-1,\beta_3\,;\,1),\ \rayt_4=(\beta_4,-1\,;\,1) . 
\end{gather*} 
In order to use Lemma~\ref{lemma:omega-tmax}, we are going to replace $(\rays_i)$ and $(\rayt_i)$ by suitable positive multiples, denoted by $(\rayS_i)$ and $(\rayT_i)$ and defined below, which have the property that $\rayS_1+\rayS_3=\rayS_2+\rayS_4$ and $\rayT_1+\rayT_3=\rayT_2+\rayT_4$. We set
\begin{gather*}
\rayS_1 = (2+\alpha_2+\alpha_4+\alpha_3(\alpha_2-\alpha_4)) \cdot \rays_1, \\
\rayS_2 = (2+\alpha_1+\alpha_3+\alpha_4(\alpha_1-\alpha_3)) \cdot \rays_2, \\ 
\rayS_3 = (2-\alpha_4-\alpha_2+\alpha_1(\alpha_4-\alpha_2)) \cdot \rays_3, \\
\rayS_4 = (2-\alpha_3-\alpha_1+\alpha_2(\alpha_3-\alpha_1)) \cdot \rays_4, \\
\rayT_1 = (2+\beta_2+\beta_4+\beta_3(\beta_2-\beta_4)) \cdot \rayt_1, \\
\rayT_2 = (2+\beta_1+\beta_3+\beta_4(\beta_1-\beta_3)) \cdot \rayt_2, \\ 
\rayT_3 = (2-\beta_4-\beta_2+\beta_1(\beta_4-\beta_2)) \cdot \rayt_3, \\
\rayT_4 = (2-\beta_3-\beta_1+\beta_2(\beta_3-\beta_1)) \cdot \rayt_4. 
\end{gather*}
One checks that the proportionality coefficients in the previous $8$ equations are positive. Note that while the form of the previous coefficients may appear ad hoc, there is actually no mystery. For example, the reader may determine the coefficients of $\rayS_i$'s (up to a common multiple) by finding the point of the intersection of the diagonals of the kite $\kite_\alpha$ and representing it as convex combinations of pairs of opposite vertices.

It follows from Lemma~\ref{lemma:omega-tmax} that the element
\begin{equation}
\label{eq:omega-alphabeta}    
 \omega_{\alpha,\beta} \coloneqq 
\rayS_1 \otimes \rayT_2 - \rayS_2 \otimes \rayT_2 + \rayS_2 \otimes \rayT_1 + \rayS_3 \otimes \rayT_3
\end{equation}
belongs to $\CC(\kite_\alpha) \tmax \CC(\kite_\beta)$.
We then introduce a linear form $f$ on $\R^3 \otimes \R^3$ (identified with the space of $3 \times 3$ matrices), defined for $m = (m_{ij})_{1 \leq i,j \leq 3}$ by
\begin{equation} \label{eq:f} f(m) = m_{11} + m_{12} + m_{21} - m_{22} - 2m_{33}. \end{equation}
\begin{lemma} \label{claim:omega-tmin}
It holds that $f(m)<0$ for every nonzero $m \in \CC(\bsquare) \tmin \CC(\bsquare)$.
\end{lemma}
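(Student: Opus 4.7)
The plan is to reduce the claim to a pointwise inequality on rank-one generators, and then to establish a strict version of the Clauser--Horne--Shimony--Holt inequality in which strictness comes precisely from the removal of the four corners from $\bsquare$.

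Any element of $\CC(\bsquare) \tmin \CC(\bsquare)$ is a finite positive combination $m = \sum_k \lambda_k \, v_1^{(k)} \otimes v_2^{(k)}$ with $\lambda_k > 0$ and $v_j^{(k)} \in \CC(\bsquare)$. If $m \neq 0$, at least one summand must be nonzero, which forces both of its factors to be nonzero; the remaining (zero) summands do not contribute to $f$. Hence, by linearity of $f$, it suffices to prove that $f(v_1 \otimes v_2) < 0$ for every pair of nonzero $v_1, v_2 \in \CC(\bsquare)$.

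For such a pair, I would parametrise $v_i = t_i (x_i, y_i, 1)$ with $t_i > 0$ and $(x_i, y_i) \in \bsquare$. Under the identification of $\R^3 \otimes \R^3$ with $3 \times 3$ matrices, a direct expansion combined with~\eqref{eq:f} yields
\[
f(v_1 \otimes v_2) = t_1 t_2 \bigl(x_1 x_2 + x_1 y_2 + y_1 x_2 - y_1 y_2 - 2\bigr).
\]
So the lemma reduces to the strict CHSH-type inequality
\[
g(x_1, y_1, x_2, y_2) := x_1(x_2+y_2) + y_1(x_2-y_2) < 2 \qquad \text{for every } (x_1, y_1), (x_2, y_2) \in \bsquare.
\]
The non-strict bound $g \leq 2$ is immediate from the identity $|x_2+y_2| + |x_2-y_2| = 2\max(|x_2|, |y_2|) \leq 2$ together with $|x_1|, |y_1| \leq 1$.

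The main obstacle is upgrading $g \leq 2$ to a strict inequality. Since $g$ is affine in $(x_1, y_1)$ with gradient $(x_2+y_2,\,x_2-y_2)$, the maximum on $[-1,1]^2$ can be attained at an interior point only when this gradient vanishes, i.e.\ $x_2 = y_2 = 0$; but then $g = 0$, so $g = 2$ forces $(x_1, y_1) \in \partial [-1,1]^2$. Being in $\bsquare$, the point $(x_1, y_1)$ lies on an open edge, which by symmetry we may take to be $(x_1, y_1) = (1, a)$ with $|a| < 1$. Substituting gives $g = (1+a)x_2 + (1-a)y_2$ with both coefficients strictly positive, so the unique maximiser on $[-1,1]^2$ is $(x_2, y_2) = (1,1)$, which is a corner excluded from $\bsquare$. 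This contradiction yields the strict inequality and completes the proof.
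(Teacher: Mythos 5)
Your proof is correct and follows essentially the same route as the paper: reduce to rank-one tensors $t_1(x_1,y_1\,;\,1)\otimes t_2(x_2,y_2\,;\,1)$ and establish the strict CHSH-type inequality $x_1x_2+x_1y_2+y_1x_2-y_1y_2<2$ on $\bsquare\times\bsquare$. The only difference is in the equality-case analysis: the paper traces strictness through the chain $|x_1|\cdot|x_2+y_2|+|y_1|\cdot|x_2-y_2|\leq|x_2+y_2|+|x_2-y_2|\leq 2$, whereas you exploit affineness in $(x_1,y_1)$ and a (correct, if unstated) symmetry of the CHSH form to reduce to the edge $x_1=1$; both arguments are valid and equally elementary.
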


\begin{proof}[Proof of Lemma~\ref{claim:omega-tmin}]
It is enough to prove Lemma~\ref{claim:omega-tmin} for $m = (x,y\,;\,1) \otimes (x',y'\,;\,1)$ with $(x,y) \in \bsquare$, $(x',y') \in \bsquare$, since any element in $\CC(\bsquare) \tmin \CC(\bsquare)$ is a positive linear combination of such tensors. Then, $f(m)= xx'+xy'+yx'-yy'-2$. We are reduced to the following elementary inequality 
\begin{equation} \label{eq:blunt-CHSH} \textnormal{if } (x,y) \in \bsquare \textnormal{ and } (x',y') \in \bsquare, \textnormal{ then } xx' + xy' + yx' - yy' < 2 .  \end{equation}
The inequality~\eqref{eq:blunt-CHSH} is a variant of the CHSH inequality. A quick proof goes as follows:
\begin{align*} | xx' + xy' + yx' - yy' | & \leq |x| \cdot |x'+y'| + |y| \cdot |x'-y'| \\
 & \leq |x'+y'| + |x'-y'| \\
 & \leq 2.
\end{align*}
We argue that one of the inequalities must be strict. Assume the last inequality to be an equality. In this case, either $|x'|$ or $|y'|$ must equal $1$. Since they cannot both equal $1$, the numbers $x'+y'$ and $x'-y'$ are nonzero. Now, if the second inequality is also an equality, then it follows that $|x|=|y|=1$, a contradiction.
\end{proof}

We now combine $\omega_{\alpha,\beta}$ defined by~\eqref{eq:omega-alphabeta} with $f$ defined in~\eqref{eq:f}. A series of straightforward yet somewhat cumbersome computations -- which we postpone -- shows that
\begin{equation} \label{eq:magical} f(\omega_{\alpha,\beta}) = - R(\alpha)R(\beta),\end{equation}
where we denote, for $\gamma=(\gamma_i)_{1 \leq i \leq 4}$,
\[ R(\gamma) = (\gamma_1 \gamma_2 -1)(\gamma_3 - \gamma_4) - (\gamma_3 \gamma_4-1)(\gamma_1-\gamma_2) . \]

Assume that $f(\omega_{\alpha,\beta}) \geq 0$. In this, case, it follows immediately from Lemmata~\ref{lemma:omega-tmax} and $\ref{claim:omega-tmin}$ that the element $\omega_{\alpha,\beta}$ belongs to $\CC(\kite_\alpha) \tmax \CC(\kite_\beta) \setminus \CC(\bsquare) \tmin \CC(\bsquare)$, as claimed. 

If $f(\omega_{\alpha,\beta}) < 0$, we reduce to the previous case by exploiting symmetries of the problem.
For $(x,y\,;\,t) \in \R^3$, we set $\sigma(x,y\,;\,t) \coloneqq (y,x\,;\,t)$. We have $\sigma(\CC(\bsquare)) = \CC(\bsquare)$ and $\sigma(\CC(\kite_\alpha)) = \CC(\kite_{\overline{\alpha}})$ with $\overline{\alpha} = (\alpha_2,\alpha_1,\alpha_4,\alpha_3)$. We also check that $R(\overline{\alpha})=-R(\alpha)$ and therefore $f(\omega_{\overline{\alpha}, \beta}) = -f(\omega_{\alpha,\beta}) > 0$. In this case, we already observed that
\[ \omega_{\overline{\alpha},\beta} \in \CC(\kite_{\overline{\alpha}}) \tmax \CC(\kite_\beta) \setminus \CC(\bsquare) \tmin \CC(\bsquare). \]
Then, the element $(\sigma \otimes \Id)(\omega_{\overline{\alpha},\beta})$ belongs to
\[ (\sigma \otimes \Id)\left( \CC(\kite_{\overline{\alpha}}) \tmax \CC(\kite_\beta) \right) = \CC(\kite_{\alpha}) \tmax \CC(\kite_\beta) \]
and does not belong to 
\[ (\sigma \otimes \Id)\left( \CC(\bsquare) \tmin \CC(\bsquare) \right) = \CC(\bsquare) \tmin \CC(\bsquare) \]
as needed. In the last two equations, we used an easily verified property of the minimal and maximal tensor products: if $\C_1 \subset V_1$ and $\C_2 \subset V_2$ are cones and $\Phi : V_1 \to V'_1$ is an isomorphism, then $(\Phi \otimes \Id)(\C_1 \tmin \C_2) = \Phi(\C_1) \tmin \C_2$ and $(\Phi \otimes \Id)(\C_1 \tmax \C_2) = \Phi(\C_1) \tmax \C_2$.
\end{proof}

We now justify the equality~\eqref{eq:magical}, by brute force.\footnote{Alternatively, the reader will find at \url{https://github.com/gaubrun/entangleability} a SageMath script which checks the correctness of~\eqref{eq:magical}.} We use shortcuts such as $\alpha_{12}=\alpha_1 \alpha_2$, $\beta_{134}=\beta_1 \beta_3 \beta_4$, and so on. Let $(\omega_{ij})_{1 \leq i,j \leq 3}$ be the coordinates of the tensor $\omega_{\alpha,\beta}$. We have

\begin{dmath*}
\omega_{11}  =   
- \alpha_{124} \beta_{124}+ \alpha_{124} \beta_{234}+ \alpha_{234} \beta_{124}- \alpha_{234} \beta_{234}- \alpha_{124} \beta_{12}- \alpha_{124} \beta_{34}- \alpha_{12} \beta_{124}+ \alpha_{12} \beta_{234}   
+ \alpha_{234} \beta_{12}+ \alpha_{234} \beta_{34}- \alpha_{34} \beta_{124}+ \alpha_{34} \beta_{234}- \alpha_{124} \beta_2+ \alpha_{124} \beta_4- \alpha_{12} \beta_{14}- \alpha_{12} \beta_{34}  
- \alpha_{14} \beta_{12} + \alpha_{14} \beta_{14}+ \alpha_{234} \beta_2- \alpha_{234} \beta_4+ \alpha_{23} \beta_{23}- \alpha_{23} \beta_{34}- \alpha_2 \beta_{124}+ \alpha_2 \beta_{234}- \alpha_{34} \beta_{12} 
- \alpha_{34} \beta_{23} + \alpha_4 \beta_{124} - \alpha_4 \beta_{234}+2 \alpha_{124}+2 \alpha_{12} \beta_4- \alpha_{14} \beta_2- \alpha_{14} \beta_4-2 \alpha_{234}+ \alpha_{23} \beta_2  
+ \alpha_{23} \beta_4- \alpha_2 \beta_{14} + \alpha_2 \beta_{23}-2 \alpha_2 \beta_{34}-2 \alpha_{34} \beta_2+2 \alpha_4 \beta_{12}- \alpha_4 \beta_{14}  + \alpha_4 \beta_{23}+2 \beta_{124}-2 \beta_{234} 
+2 \alpha_{14}+2 \alpha_{23} + \alpha_2 \beta_2+3 \alpha_2 \beta_4 + 3 \alpha_4 \beta_2+ \alpha_4 \beta_4+2 \beta_{14}+2 \beta_{23}+2 \alpha_2-2 \alpha_4+2 \beta_2-2 \beta_4+4,    
\end{dmath*}

\begin{dmath*}
\omega_{12} =  \
\alpha_{124} \beta_{123}-\alpha_{124} \beta_{134}-\alpha_{234} \beta_{123}+\alpha_{234} \beta_{134}+\alpha_{124} \beta_{12}+\alpha_{124} \beta_{34}+\alpha_{14} \beta_{123}   
-\alpha_{14} \beta_{134}-\alpha_{234} \beta_{12}-\alpha_{234} \beta_{34}+\alpha_{23} \beta_{123}-\alpha_{23} \beta_{134}+\alpha_{124} \beta_{1}-\alpha_{124} \beta_{3}  
+\alpha_{12} \beta_{12}-\alpha_{12} \beta_{23}+\alpha_{14} \beta_{23}+\alpha_{14} \beta_{34}-\alpha_{234} \beta_{1} +\alpha_{234} \beta_{3}+\alpha_{23} \beta_{12}+\alpha_{23} \beta_{14} 
+\alpha_2 \beta_{123}-\alpha_2 \beta_{134}-\alpha_{34} \beta_{14}+\alpha_{34} \beta_{34}-\alpha_4 \beta_{123}+\alpha_4 \beta_{134}-2 \alpha_{124}+\alpha_{12} \beta_{1}+\alpha_{12} \beta_{3} 
-2 \alpha_{14} \beta_{3}+2 \alpha_{234}+2 \alpha_{23} \beta_{1}+2 \alpha_2 \beta_{12}+\alpha_2 \beta_{14}-\alpha_2 \beta_{23}-\alpha_{34} \beta_{1}-\alpha_{34} \beta_{3}+\alpha_4 \beta_{14}    -\alpha_4 \beta_{23}-2 \alpha_4 \beta_{34}+2 \beta_{123}-2 \beta_{134}-2 \alpha_{12}+3 \alpha_2 \beta_{1}+\alpha_2 \beta_{3}-2 \alpha_{34}+\alpha_4 \beta_{1}+3 \alpha_4 \beta_{3}+2 \beta_{14}  
+2 \beta_{23} -2 \alpha_2+2 \alpha_4+2 \beta_{1}-2 \beta_{3}+4,
\end{dmath*}
\begin{dmath*}
\omega_{21} =  \
\alpha_{123} \beta_{124}-\alpha_{123} \beta_{234}-\alpha_{134} \beta_{124}+\alpha_{134} \beta_{234}+\alpha_{123} \beta_{14}+\alpha_{123} \beta_{23}+\alpha_{12} \beta_{124}  
-\alpha_{12} \beta_{234}-\alpha_{134} \beta_{14}-\alpha_{134} \beta_{23}+\alpha_{34} \beta_{124}-\alpha_{34} \beta_{234}+\alpha_{123} \beta_{2}-\alpha_{123} \beta_{4} 
+\alpha_{12} \beta_{12}+\alpha_{12} \beta_{23}-\alpha_{134} \beta_{2}+\alpha_{134} \beta_{4}+\alpha_{14} \beta_{23}-\alpha_{14} \beta_{34}+\alpha_1 \beta_{124}-\alpha_1 \beta_{234} 
-\alpha_{23} \beta_{12}+\alpha_{23} \beta_{14}+\alpha_{34} \beta_{14}+\alpha_{34} \beta_{34}-\alpha_3 \beta_{124}+\alpha_3 \beta_{234}+2 \alpha_{123}+2 \alpha_{12} \beta_{2} -2 \alpha_{134} 
+\alpha_{14} \beta_{2}+\alpha_{14} \beta_{4}+\alpha_1 \beta_{12}+2 \alpha_1 \beta_{23}-\alpha_1 \beta_{34}-\alpha_{23} \beta_{2}-\alpha_{23} \beta_{4}-2 \alpha_{34} \beta_{4}+\alpha_3 \beta_{12} 
-2 \alpha_3 \beta_{14} -\alpha_3 \beta_{34}-2 \beta_{124}+2 \beta_{234}+2 \alpha_{14}+3 \alpha_1 \beta_{2}+\alpha_1 \beta_{4}+2 \alpha_{23}+\alpha_3 \beta_{2}+3 \alpha_3 \beta_{4}-2 \beta_{12} 
-2 \beta_{34}+2 \alpha_1 -2 \alpha_3-2 \beta_{2}+2 \beta_{4}+4,
\end{dmath*}
\begin{dmath*}
\omega_{22} =  \
\alpha_{123} \beta_{123}-\alpha_{123} \beta_{134}-\alpha_{134} \beta_{123}+\alpha_{134} \beta_{134}+\alpha_{123} \beta_{14}+\alpha_{123} \beta_{23}-\alpha_{134} \beta_{14} 
-\alpha_{134} \beta_{23}+\alpha_{14} \beta_{123}-\alpha_{14} \beta_{134}+\alpha_{23} \beta_{123}-\alpha_{23} \beta_{134}+\alpha_{123} \beta_{1}-\alpha_{123} \beta_{3} 
+\alpha_{12} \beta_{14}-\alpha_{12} \beta_{34}-\alpha_{134} \beta_{1}+\alpha_{134} \beta_{3}+\alpha_{14} \beta_{12}+\alpha_{14} \beta_{14}+\alpha_1 \beta_{123}-\alpha_1 \beta_{134} 
+\alpha_{23} \beta_{23}+\alpha_{23} \beta_{34}-\alpha_{34} \beta_{12}+\alpha_{34} \beta_{23}-\alpha_3 \beta_{123}+\alpha_3 \beta_{134}+2 \alpha_{123}+\alpha_{12} \beta_{1}+\alpha_{12} \beta_{3} 
-2 \alpha_{134}+2 \alpha_{14} \beta_{1}+\alpha_1 \beta_{12}+2 \alpha_1 \beta_{14}-\alpha_1 \beta_{34}-2 \alpha_{23} \beta_{3}-\alpha_{34} \beta_{1}-\alpha_{34} \beta_{3}+\alpha_3 \beta_{12} 
-2 \alpha_3 \beta_{23}-\alpha_3 \beta_{34}+2 \beta_{123}-2 \beta_{134}+2 \alpha_{12}+3 \alpha_1 \beta_{1}+\alpha_1 \beta_{3}+2 \alpha_{34}+\alpha_3 \beta_{1}+3 \alpha_3 \beta_{3}+2 \beta_{12} 
+2 \beta_{34}+2 \alpha_1-2 \alpha_3+2 \beta_{1}-2 \beta_{3}-4,
\end{dmath*}

\begin{dmath*}
\omega_{33} =  \
\alpha_{12} \beta_{12}-\alpha_{12} \beta_{14}-\alpha_{14} \beta_{12}+\alpha_{14} \beta_{23}+\alpha_{23} \beta_{14}-\alpha_{23} \beta_{34}-\alpha_{34} \beta_{23}+\alpha_{34} \beta_{34}  
+\alpha_{12} \beta_{2}+\alpha_{12} \beta_{4}-\alpha_{14} \beta_{1}-\alpha_{14} \beta_{3}-\alpha_1 \beta_{14}+\alpha_1 \beta_{23}+\alpha_{23} \beta_{1}+\alpha_{23} \beta_{3}+\alpha_2 \beta_{12} 
-\alpha_2 \beta_{34}-\alpha_{34} \beta_{2}-\alpha_{34} \beta_{4}-\alpha_3 \beta_{14}+\alpha_3 \beta_{23}+\alpha_4 \beta_{12}-\alpha_4 \beta_{34}-2 \alpha_{12}+2 \alpha_{14}-\alpha_1 \beta_{1} 
+\alpha_1 \beta_{2}-\alpha_1 \beta_{3}+\alpha_1 \beta_{4}+2 \alpha_{23}+\alpha_2 \beta_{1}+\alpha_2 \beta_{2}+\alpha_2 \beta_{3}+\alpha_2 \beta_{4}-2 \alpha_{34}-\alpha_3 \beta_{1} 
+\alpha_3 \beta_{2}-\alpha_3 \beta_{3}+\alpha_3 \beta_{4}+\alpha_4 \beta_{1}+\alpha_4 \beta_{2}+\alpha_4 \beta_{3}+\alpha_4 \beta_{4}-2 \beta_{12}+2 \beta_{14}+2 \beta_{23}-2 \beta_{34}+8.
\end{dmath*}
\medskip

It follows that
\smallskip

\begin{dmath*}
 f(\omega_{\alpha,\beta}) =   \ \omega_{11}+\omega_{12}+\omega_{21}-\omega_{22} - 2\omega_{33}  
=  -\alpha_{123} \beta_{123}+\alpha_{123} \beta_{124}+\alpha_{123} \beta_{134}-\alpha_{123} \beta_{234}+\alpha_{124} \beta_{123}-\alpha_{124} \beta_{124}-\alpha_{124} \beta_{134} 
+\alpha_{124} \beta_{234}+\alpha_{134} \beta_{123}-\alpha_{134} \beta_{124}-\alpha_{134} \beta_{134}+\alpha_{134} \beta_{234}-\alpha_{234} \beta_{123}+\alpha_{234} \beta_{124}  
+\alpha_{234} \beta_{134}-\alpha_{234} \beta_{234}-\alpha_{123} \beta_{1}+\alpha_{123} \beta_{2}+\alpha_{123} \beta_{3}-\alpha_{123} \beta_{4}+\alpha_{124} \beta_{1}  
-\alpha_{124} \beta_{2}-\alpha_{124} \beta_{3}+\alpha_{124} \beta_{4}+\alpha_{134} \beta_{1}-\alpha_{134} \beta_{2}-\alpha_{134} \beta_{3}+\alpha_{134} \beta_{4}-\alpha_1 \beta_{123}+\alpha_1 \beta_{124}  
+\alpha_1 \beta_{134}-\alpha_1 \beta_{234}-\alpha_{234} \beta_{1}+\alpha_{234} \beta_{2}+\alpha_{234} \beta_{3}-\alpha_{234} \beta_{4}+\alpha_2 \beta_{123}-\alpha_2 \beta_{124}-\alpha_2 \beta_{134}  
+\alpha_2 \beta_{234}+\alpha_3 \beta_{123}-\alpha_3 \beta_{124}-\alpha_3 \beta_{134}+\alpha_3 \beta_{234}-\alpha_4 \beta_{123}+\alpha_4 \beta_{124}+\alpha_4 \beta_{134}-\alpha_4 \beta_{234}  
-\alpha_1 \beta_{1}+\alpha_1 \beta_{2}+\alpha_1 \beta_{3}-\alpha_1 \beta_{4}+\alpha_2 \beta_{1}-\alpha_2 \beta_{2}-\alpha_2 \beta_{3}+\alpha_2 \beta_{4}+\alpha_3 \beta_{1}-\alpha_3 \beta_{2}  
-\alpha_3 \beta_{3}+\alpha_3 \beta_{4}-\alpha_4 \beta_{1}+\alpha_4 \beta_{2}+\alpha_4 \beta_{3}-\alpha_4 \beta_{4} ,
\end{dmath*}

\noindent and one can check that this coincides with the expansion of
\[ -\big(\alpha_{123} - \alpha_{124} - \alpha_{134} + \alpha_{234} + \alpha_1-\alpha_2-\alpha_3+\alpha_4
\big) \big(\beta_{123} - \beta_{124} - \beta_{134} + \beta_{234} + \beta_1-\beta_2-\beta_3+\beta_4
\big), \]
as needed.

\begin{remark}
It is instructive to follow our proof of entangleability on a concrete example. We do this on the simplest case relevant to quantum mechanics: a pair of qubits. This corresponds to two copies of the cone $\PSD_2$. It is useful to recall that under the canonical isomorphism $\Herm_m \otimes \Herm_n \simeq \Herm_{mn}$ we have the inclusions
\begin{equation} \label{equation:PSD}
\PSD_m \tmin \PSD_n \subset \PSD_{mn} \subset \PSD_m \tmax \PSD_n ,
\end{equation}
which are both strict for $m$, $n \geq 2$. The cone in the left-hand side of~\eqref{equation:PSD} is known as the cone of \emph{separable operators} and the cone in the right-hand side of~\eqref{equation:PSD} as the cone of \emph{block-positive operators} (see for example~\cite[Section 2.4.1]{ABMB}).

The cone $\PSD_2$ is isomorphic to the Lorentz cone $\CC(B_3)$, where $B_3$ is a $3$-dimensional Euclidean ball, commonly called the \emph{Bloch ball} in quantum information. A kite-square sandwiching is given by the maps $\Psi : \R^2 \times \R \to \Herm_2$, $\Phi : \Herm_2 \to \R^2 \times \R$, defined by
\[ \Psi(x,y\,;\,t) = \frac{1}{2} \begin{pmatrix} t+x & y \\ y & t-x \end{pmatrix}, \ \
 \Phi(A) = (A_{11}-A_{22}, A_{12}+A_{21} \,;\, A_{11}+A_{22}). \]
One checks that $\Psi(\CC(\kite_{\alpha})) \subset \PSD_2$ for $\alpha=(0,0,0,0)$, that $\Phi(\PSD_2) \subset \CC(\bsquare)$ and that $\Phi \circ \Psi = \Id$. The element $\omega$ in $\PSD_2 \tmax \PSD_2 \setminus \PSD_2 \tmin \PSD_2$ which is produced by the proof of Theorem~\ref{theorem:KS-entangleable} is constructed from the operators
\[ \rayS_1=\Psi(1,0\,;\,1)=\begin{pmatrix} 1 & 0 \\ 0 & 0 \end{pmatrix}, \
\rayS_2= \Psi(0,1\,;\,1)= \frac{1}{2} \begin{pmatrix} 1 & 1 \\ 1 & 1 \end{pmatrix}, \
\rayS_3= \Psi(-1,0\,;\,1)=\begin{pmatrix} 0 & 0 \\ 0 & 1 \end{pmatrix}, \
\]
via the formula
\[ \omega = \rayS_1 \otimes \rayS_2 - \rayS_2 \otimes \rayS_2 + \rayS_2 \otimes \rayS_1 + \rayS_3 \otimes \rayS_3 = 
\frac{1}{4} \begin{pmatrix} 3 & -1 & -1 & -1 \\ -1 & 1 & -1 & 1 \\ -1 & -1 & 1 & 1 \\ -1 & 1 & 1& 3 \end{pmatrix}.
\]
For this explicit example, a simple way to check that $\omega \not \in \PSD_2 \tmin \PSD_2$ is to show that $\omega \not \in \PSD_4$, which is the case since $\omega$ has an eigenvalue equal to $\frac{1-\sqrt{2}}{2}<0$. This is in fact expected, because $\omega$ has tensor rank $3$, and every such operator on $\Herm_2\otimes \Herm_2$ is separable whenever it is positive semidefinite~\cite[Theorem~3.2]{Cariello2015}.
\end{remark}

\section{Preparatory lemmata} \label{section:lemmas}

\subsection{A topological lemma}

Let $K \subset \R^n$ be a convex body, and $x$, $y$ in $K$. We say that $\{x,y\}$ is an \emph{antipodal pair} if there is a nonzero linear form $f$ on $\R^n$ such that
\begin{equation} \label{eq:antipodal} f(x) = \max_{K} f, \ f(y) = \min_{K} f .\end{equation}
We need a preliminary result on the existence of `sufficiently many' antipodal pairs. In what follows, we will denote with $[x,y]=\{ tx + (1-t)y \st t\in [0,1]\}$ the segment joining two points $x$ and $y$.

\begin{lemma} \label{lemma:antipodal}
Let $K \subset \R^n$ be a convex body. For every $z \in K$, there exists an antipodal pair $\{x, y\}$ such that $z \in [x,y]$. 
\end{lemma}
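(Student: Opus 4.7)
The case $z \in \partial K$ is immediate: the supporting hyperplane theorem provides a nonzero linear functional $f$ with $f(z) = \max_K f$, and any $y \in K$ achieving $\min_K f$ yields the antipodal pair $\{z, y\}$ containing $z$. For $z \in \inter K$, I would approximate $K$ in Hausdorff distance by smooth strictly convex bodies $K_j$, produce an antipodal pair $(x_j, y_j)$ through $z$ in each $K_j$ (witnessed by some $f_j \in S^{n-1}$), and extract a convergent subsequence; continuity of $\max_K$ and $\min_K$ under Hausdorff convergence passes the antipodal property to the limit. This reduces the problem to the smooth strictly convex case.

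Assume henceforth that $K$ is smooth and strictly convex with $z \in \inter K$. The Gauss map $\nu \colon \partial K \to S^{n-1}$ is a homeomorphism, and for each $u \in S^{n-1}$ the ray $z + \Rp u$ meets $\partial K$ at a unique point $x(u)$, continuous in $u$. The chord of $K$ passing through $z$ in direction $u$ has endpoints $x(u)$ and $x(-u)$, and it is an antipodal pair precisely when $\nu(x(u)) + \nu(x(-u)) = 0$. Setting $N \colon S^{n-1} \to S^{n-1}$, $N(u) = \nu(x(u))$, the goal reduces to showing that $N$ has an ``odd point'' $u$, i.e.\ one where $N(u) = - N(-u)$.

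I would handle this topologically in two steps. First, $\deg N = 1$: this is checked directly when $K$ is a ball centered at $z$ (then $N$ is the identity) and extended to the general smooth strictly convex case by homotopy invariance along the path $K_t = (1-t) K + t B_\delta(z)$ of smooth strictly convex bodies containing $z$ in their interior. Second, if $N(u) + N(-u)$ were nowhere zero, the straight-line homotopy
\[ N_s(u) = \frac{N(u) + s \, N(-u)}{|N(u) + s \, N(-u)|}, \quad s \in [0,1], \]
would be well-defined --- the denominator does not vanish for $s < 1$ because $|sN(-u)| = s < 1 = |N(u)|$, and for $s = 1$ by assumption --- giving $\deg N_1 = 1$. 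But $N_1$ is even, so it factors through the projection $\pi \colon S^{n-1} \to \mathbf{R}\mathrm{P}^{n-1}$, and a standard cohomology computation shows that any map factoring this way has even degree (either $\pi^*$ is multiplication by $\pm 2$ in top $\Z$-cohomology when $n-1$ is odd, or $H^{n-1}(\mathbf{R}\mathrm{P}^{n-1}; \Z) = \Z/2$ forces the degree to be $0$ when $n-1$ is even). This contradicts $\deg N_1 = 1$, so some odd point of $N$ exists.

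The main obstacle is the topological step: performing the degree computation $\deg N = 1$ and then deducing the Borsuk--Ulam-flavoured conclusion that $N$ admits an odd point. Everything else is routine: the supporting hyperplane theorem for the boundary case, standard approximation of convex bodies by smooth strictly convex ones for the reduction, and continuity of the Gauss map and of the ray-to-boundary map $u \mapsto x(u)$ in the smooth strictly convex setting.
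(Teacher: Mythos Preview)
Your proposal is correct and follows essentially the same route as the paper: reduce to the smooth strictly convex case by Hausdorff approximation, then obtain a contradiction from the fact that a degree-$1$ self-map of $S^{n-1}$ cannot be homotopic to an even map. The only difference is the parametrisation---the paper works with the support map $F:S^{n-1}\to\partial K$ and builds a single homotopy $H(t,\theta)=F^{-1}\bigl(R_z((1-t)F(\theta)+tF(-\theta))\bigr)$ running directly from the identity ($t=0$) to an even map ($t=\tfrac12$), whereas you compose the radial map with the Gauss map and must first establish $\deg N=1$ separately. That extra step can be shortened: since $z\in\inter K$ forces $\langle u,N(u)\rangle>0$, the map $N$ is never antipodal to the identity, so a straight-line homotopy to $\mathrm{id}$ gives $\deg N=1$ without deforming $K$ to a ball and checking that the associated Gauss maps vary continuously.
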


Note that for $n=2$ the topological argument in the following proof can be replaced by the intermediate value theorem.

\begin{proof}
We recall the following fact: if $z \in \inter(K)$, we can define the radial projection $R_z:K\setminus\{z\}\rightarrow \partial K$ by 
\[ \{R_z(x)\}=\{z+\lambda(x-z):\lambda\geq 0\}\cap \partial K.\]
That is, $R_z(x)$ is the intersection of $\partial K$ with the ray originating at $z$ and passing through $x$. Moreover, the function $R_z$ is continuous. To see this, define $\alpha_z(x)$ by the formula $R_z(x) = z+\alpha_z(x)^{-1}(x-z)$, and observe that the function $\alpha_z$ (which is the gauge functional of $K$ with respect to $z$) is convex, hence continuous.

We first assume that $K$ is \emph{regular} in the following sense: for every $\theta \in S^{n-1}$, there is a unique $F(\theta) \in \partial K$ which maximises $x \mapsto \scalar{x}{\theta}$ over $K$, and moreover $F : S^{n-1} \to \partial K$ is an onto homeomorphism. Note also that the minimum over $K$ of the function $x \mapsto \scalar{x}{\theta}$ is achieved at $F(-\theta)$, so that $\{F(\theta), F(-\theta)\}$ is an antipodal pair. Therefore, we need to prove that $K$ is equal to the set
\[ X \coloneqq \bigcup_{\theta \in S^{n-1}} [F(\theta),F(-\theta)]. \]

The surjectivity of $F$ implies that $\partial K \subset X$. Assume by contradiction that there is $z \in \mathrm{int}(K) \setminus X$. 
Define a map $H : [0,1] \times S^{n-1} \to S^{n-1}$ by the formula
\[  H(t,\theta) = F^{-1}\left(R_z\left((1-t)F(\theta)+t F(-\theta)  \right)\right).\]
It can be easily checked that $H$ is well defined (since $z \not \in X$) and continuous. Note that $H(0,\cdot)$ is the identity map on $S^{n-1}$, while $H(1/2,\cdot)$ is an even map on $S^{n-1}$ (i.e.\ $H(1/2,\theta)=H(1/2,-\theta)$). At this point we reach a contradiction, since the identity map cannot be homotopic to an even map: the identity has degree $1$, an even map has even degree, and the degree is a homotopy invariant~\cite[Section 2.2, especially Exercise 14]{Hatcher02}.

The extension to the general case relies on the following classical fact from convex geometry. For $\e>0$, we denote by $K_{(\e)}$ the $\e$-enlargement of a convex body $K$, i.e.\ the set of points at (Euclidean) distance at most $\e$ from $K$. 
\begin{lemma} \label{lemma:approximation}
If $K$ is a convex body in $\R^n$ and $\e>0$, there is a regular convex body $K'$ such that $K \subset K' \subset K_{(\e)}$.
\end{lemma}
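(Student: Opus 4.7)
The lemma is a classical density statement: smooth, strictly convex bodies are dense among all convex bodies in the Hausdorff metric (see, e.g., Schneider's monograph \emph{Convex Bodies: The Brunn--Minkowski Theory}). The plan is to produce $K'$ explicitly by interleaving Minkowski sums with a Euclidean ball and polar-duality operations.

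After translating so that $0\in\inter(K)$, I will first set $K_0 = K + \delta B$ for small $\delta>0$, where $B$ denotes the closed Euclidean unit ball. Minkowski sum with a ball automatically produces smoothness: every $y\in\partial K_0$ has a unique nearest point $\pi(y)\in K$ and a unique outward unit normal $(y-\pi(y))/\delta$, so for every $\theta\in S^{n-1}$ the maximiser of $\langle\cdot,\theta\rangle$ over $K_0$ is unique. However, flat faces of $K$ survive into $K_0$, so $K_0$ need not yet be strictly convex.

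Next I exploit the fact that polar duality interchanges smoothness and strict convexity for bodies containing the origin in their interior. Concretely, I set $L = K_0^\circ + \delta' B$ and $K' = L^\circ$ for small $\delta'>0$. Since $K_0$ is smooth, $K_0^\circ$ is strictly convex; the Minkowski-sum argument applied to $K_0^\circ$ again makes $L$ smooth. The key new point is that $L$ remains strictly convex: any nontrivial segment in $\partial L$ would lie in a supporting hyperplane with constant outward unit normal $\nu$ (by the smoothness just established), and the nearest-point projection onto $K_0^\circ$ would translate it rigidly by $-\delta'\nu$ into a segment in $\partial K_0^\circ$, contradicting the strict convexity of $K_0^\circ$. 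Thus $L$ is both smooth and strictly convex, and polar-dualising once more yields a $K'$ that is simultaneously smooth and strictly convex. This is precisely the regularity property required: the map $F:S^{n-1}\to\partial K'$ sending $\theta$ to the unique maximiser of $\langle\cdot,\theta\rangle$ is well-defined (by strict convexity), injective (by smoothness), continuous, and surjective (every boundary point of a strictly convex body is exposed), hence a homeomorphism between compact Hausdorff spaces.

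Finally I choose the constants so as to enforce the sandwich $K\subset K'\subset K_{(\e)}$. The outer inclusion is immediate: $K' = L^\circ \subset K_0^{\circ\circ} = K_0 = K+\delta B \subset K_{(\delta)}$, which lies in $K_{(\e)}$ whenever $\delta<\e$. For the inner inclusion, the strict containment $K\subsetneq K_0$ yields a strict containment $K_0^\circ \subsetneq K^\circ$ with a positive Hausdorff gap, so $L = K_0^\circ + \delta' B \subset K^\circ$ for all sufficiently small $\delta'$, and the bipolar theorem then gives $K \subset L^\circ = K'$. The only step that is not pure polar-duality bookkeeping is the preservation of strict convexity under Minkowski sum with a ball in the third paragraph; I expect this to be the only place where a short direct geometric argument is needed.
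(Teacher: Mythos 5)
Your construction is exactly the one the paper sketches: it records the lemma as folklore and simply gives the formula $\left((K_{(\e)})^\circ_{(\e)}\right)^\circ$ with references to Klee and Schneider, so you are filling in the details of the same double-polarity argument rather than taking a different route, and the details you supply (smoothing by Minkowski sum with a ball, the exchange of smoothness and strict convexity under polarity, the rigid-translation argument showing strict convexity survives the second enlargement, and the choice of $\delta,\delta'$ for the sandwich) all check out. One wording slip worth fixing: in your second paragraph, the conclusion you draw from the unique-nearest-point property --- that for every $\theta\in S^{n-1}$ the maximiser of $\langle\cdot,\theta\rangle$ over $K_0$ is unique --- is literally the assertion that $K_0$ is strictly convex, which you correctly deny in the very next sentence; what the nearest-point argument actually yields is that every boundary point of $K_0$ has a unique outward normal, i.e.\ smoothness, and since that is the property your polarity step uses, the argument itself is unaffected.
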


Consider $K$ a general convex body. By Lemma~\ref{lemma:approximation}, there is a sequence $(K_k)_{k \geq 1}$ of regular convex bodies such that $K \subset K_k \subset K_{(1/k)}$ for every $k$. By the previous part, any $z \in K$ can be written as $z=t_k x_k + (1-t_k) y_k$ with $t_k \in [0,1]$ and $\{x_k, y_k\}$ an antipodal pair in $K_k$. This means that there exists $\theta_k \in S^{n-1}$ such that the functional $\scalar{\,\cdot\,}{\theta_k}$ is maximal on $K_k$ at $x_k$, and minimal at $y_k$. By compactness, up to extracting subsequences, we may assume that $t_k \to t$, $x_k \to x$, $y_k \to y$ and $\theta_k \to \theta$ as $k\to\infty$. We then have that $z= tx+(1-t)y$. Moreover, by uniform convergence the functional $\scalar{\,\cdot\,}{\theta}$ is maximal on $K$ at $x$, and minimal at $y$. It follows that $\{x,y\}$ is an antipodal pair in $K$. This proves the claim.
\end{proof}

Lemma~\ref{lemma:approximation} is a folklore result, which appears for example in~\cite{Klee59}. What we call regular is equivalent~\cite[Lemma 2.2.12]{Schneider13} to being both \emph{smooth} (i.e.\ such that every boundary point has a unique supporting hyperplane) and \emph{strictly convex} (i.e.\ such that the boundary does not contain a segment). When $0 \in  \inter(K)$, an approximation of $K$ by regular convex bodies is produced by the simple formula $\left( (K_{(\e)})^\circ_{(\e)} \right)^\circ$ as $\e \to 0$, where $\circ$ denotes the polarity in $\R^n$. For stronger approximation properties, see also~\cite[Theorem 3.4.1]{Schneider13}. 

\subsection{The parameter $\boldsymbol{\delta(K)}$}

We associate to each convex body a parameter which plays a central role in our proof of Theorem~\ref{theorem:KS}. We first recall standard definitions about the facial structure of convex bodies.

We denote by $\aff(X)$ the affine subspace spanned by a nonempty subset $X \subset \R^n$. If $A \subset \R^n$ is a closed convex subset, we denote by $\relint(A)$ and $\relbd(A)$ its \emph{relative interior} and \emph{relative boundary}, i.e.\ its interior and boundary when seen as a subset of $\aff(A)$. The following basic lemmata will we used multiple times.
\begin{lemma}[{\cite[Lemma~1.1.9]{Schneider13}}] \label{lemma:interior-convex}
Let $A \subset \R^n$ be convex. If $x \in \relint(A)$ and $y \in A$, then $\relint [x,y] \subset \relint(A)$.
\end{lemma}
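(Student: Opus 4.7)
The plan is to reduce to the full-dimensional case and then prove it by a direct ball-shrinking argument exploiting convexity. The geometric content is trivial; the only care needed is in handling the relative interior cleanly.

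First, I would reduce to the case where $A$ has nonempty interior. Pick any $x_0 \in \aff(A)$, replace $A$ by $A - x_0$, so that $\aff(A)$ becomes a linear subspace $V \subset \R^n$, and work inside $V$ identified with $\R^k$ where $k = \dim \aff(A)$. Under this identification $\relint(A) = \inter_V(A)$, and the hypothesis $x \in \relint(A)$ tells us that $A$ has nonempty interior in $V$. The degenerate case $x = y$ is immediate since then $\relint[x,y] = \{x\} \subset \relint(A)$, so we may assume $x \neq y$.

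For the main case, fix $z = (1-t)x + ty$ with $t \in (0,1)$; I want to show $z \in \inter_V(A)$. Since $x \in \inter_V(A)$, choose $r > 0$ with $B_V(x,r) \subset A$, where $B_V$ denotes balls inside $V$. The claim is that $B_V(z, (1-t)r) \subset A$. Indeed, for any $w \in B_V(z, (1-t)r)$, define
\[ x' \coloneqq \frac{w - ty}{1-t}, \]
so that $w = (1-t) x' + t y$. A direct computation gives
\[ \|x' - x\| = \frac{\|w - (1-t)x - ty\|}{1-t} = \frac{\|w - z\|}{1-t} < r, \]
hence $x' \in B_V(x,r) \subset A$. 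Since $y \in A$ and $A$ is convex, $w \in A$. This proves $z \in \inter_V(A) = \relint(A)$.

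The main obstacle here is really just notational rather than mathematical: setting up the affine reduction and the identification of $\relint(A)$ with an honest interior inside $\aff(A)$. Once that is done, the shrinking-ball argument by a factor $(1-t)$ — essentially rescaling around $y$ — is entirely routine and uses nothing beyond convexity of $A$. No separation theorem or deeper convex-analytic machinery is required.
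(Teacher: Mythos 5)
Your proof is correct. The paper does not prove this lemma at all --- it simply cites \cite[Lemma~1.1.9]{Schneider13} --- and your reduction to the affine hull followed by the shrinking-ball argument (dilating by the factor $1-t$ about $y$) is precisely the standard textbook proof of that result, so there is nothing to add.
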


\begin{lemma}[{\cite[Theorem 6.9]{Rockafellar70}}]\label{lemma:interior-pyramid}
Let $F \subset \R^n$ be a convex set and $x \in \R^n \setminus \aff(F)$. Then
\[ \relint \conv( F \cup \{x\}) = \{ \lambda x + (1-\lambda) y \st y \in \relint(F), \lambda \in (0,1) \}. \]
\end{lemma}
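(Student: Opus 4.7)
The approach is to reduce the problem to a single parameterization lemma and then read off both inclusions. Let $C\coloneqq \conv(F\cup\{x\})$. Because $x\notin\aff(F)$, the affine map
\[ \Theta\colon \R\times \aff(F) \to \aff(F\cup\{x\}),\qquad (\lambda,y) \longmapsto \lambda x + (1-\lambda)y \]
is a bijection, hence an affine homeomorphism. I would verify surjectivity by rewriting an affine combination $\alpha_0 x+\sum_i \alpha_i f_i$ as $\alpha_0 x + (1-\alpha_0)\big(\sum_i \tfrac{\alpha_i}{1-\alpha_0}f_i\big)$ whenever $\alpha_0\neq 1$. Injectivity uses that the equality $(\lambda-\lambda')x=(1-\lambda')y'-(1-\lambda)y$ with $\lambda\neq \lambda'$ would express $x$ as an affine combination of points of $\aff(F)$, contradicting $x\notin\aff(F)$. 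Collecting $F$-terms in an arbitrary convex combination further gives $C = \Theta\big([0,1]\times F\big)$, and $\aff(C) = \aff(F\cup\{x\})$.

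The forward inclusion $\supseteq$ follows by a direct open-neighborhood argument. Given $z=\lambda x+(1-\lambda)y$ with $\lambda\in(0,1)$ and $y\in \relint(F)$, pick a neighborhood $U\subset F$ of $y$ that is relatively open in $\aff(F)$, together with $\delta>0$ such that $[\lambda-\delta,\lambda+\delta]\subset(0,1)$. Since $\Theta$ is an affine homeomorphism, $\Theta\big((\lambda-\delta,\lambda+\delta)\times U\big)$ is a relatively open neighborhood of $z$ in $\aff(C)$ entirely contained in $C$, so $z\in\relint(C)$.

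For the reverse inclusion, I would take $z\in\relint(C)$ and write $z=\Theta(\lambda,y)$ with $\lambda\in[0,1]$ and $y\in F$. If $\lambda\in\{0,1\}$, then $\Theta(\lambda\pm\epsilon,y)$ lies in $\aff(C)\setminus C$ for every $\epsilon>0$ by uniqueness of the $\Theta$-coordinates, and accumulates at $z$, contradicting $z\in\relint(C)$; hence $\lambda\in(0,1)$. If $y\in\relbd(F)$, applying the supporting hyperplane theorem inside $\aff(F)$ produces a direction $v\in\aff(F)-y$ with $y+\epsilon v\notin F$ for every $\epsilon>0$; then $z+\epsilon(1-\lambda)v=\Theta(\lambda,y+\epsilon v)\in\aff(C)\setminus C$ also accumulates at $z$, another contradiction. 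Therefore $y\in\relint(F)$, completing the proof.

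The only subtle ingredient is the uniqueness part of the $\Theta$-parameterization, which guarantees that the outward perturbations in the last paragraph actually exit $C$ rather than being absorbed by some alternative representation; this is precisely where the hypothesis $x\notin\aff(F)$ is essential. Everything else is routine convex-analytic bookkeeping, so I do not expect a real obstacle.
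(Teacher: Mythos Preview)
The paper does not supply a proof of this lemma; it is simply cited from Rockafellar without argument. I therefore assess your attempt on its own merits.

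Your overall strategy is sound, but the central claim that $\Theta(\lambda,y)=\lambda x+(1-\lambda)y$ is an \emph{affine bijection} is false on both counts. The map is not affine in $(\lambda,y)$ because of the bilinear cross-term $-\lambda y$. It is also not a bijection: at $\lambda=1$ it collapses all of $\{1\}\times\aff(F)$ to the single point $x$, so injectivity fails there; and any point of the form $x+v$ with $v\neq 0$ a direction vector of $\aff(F)$ lies in $\aff(F\cup\{x\})$ but is not in the range of $\Theta$, so surjectivity fails too. Your own surjectivity sketch handles only the case $\alpha_0\neq 1$, and your injectivity argument in fact proves only that the $\lambda$-coordinate is uniquely determined by the image point---it says nothing about $y$ when $\lambda=\lambda'=1$.

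Fortunately, that last fact is exactly what the remainder of your argument actually uses. Let $\phi$ be the affine functional on $\aff(F\cup\{x\})$ with $\phi\equiv 0$ on $\aff(F)$ and $\phi(x)=1$; then $\phi(\Theta(\lambda,y))=\lambda$, so $\lambda$ is always recoverable, and for $\lambda\neq 1$ so is $y=(z-\lambda x)/(1-\lambda)$. Hence $\Theta$ restricted to $(\R\setminus\{1\})\times\aff(F)$ is a homeomorphism onto an open subset of $\aff(C)$, with the explicit continuous inverse $z\mapsto\bigl(\phi(z),\,(z-\phi(z)x)/(1-\phi(z))\bigr)$. This localized statement is all you need: in the $\supseteq$ direction you only require $\Theta$ to be open near points with $\lambda\in(0,1)$; in the $\subseteq$ direction you only use uniqueness of $\lambda$ (to rule out $\lambda\in\{0,1\}$) and then, once $\lambda\in(0,1)$ is known, uniqueness of $y$ (to rule out $y\in\relbd F$). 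With this correction to the setup, your proof goes through.
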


Fix a convex body $K \subset \R^n$. Let $F$ be a closed convex set with $F \subset K$. We say that $F$ is a \emph{face} if every segment contained in $K$ whose relative interior intersects $F$ is entirely contained in $F$. A face is \emph{proper} if $F \neq \emptyset$ and $F \neq K$. Every proper face is contained in $\partial K$. The dimension of $F$, denoted $\dim(F)$, is the dimension of $\aff(F)$.

An affine hyperplane $H \subset \R^n$ is a \emph{supporting hyperplane of $K$} if $H$ intersects $\partial K$ and is disjoint from $\mathrm{int}(K)$. A face is said to be \emph{exposed} if it is the intersection of $K$ with a supporting hyperplane. 

A \emph{maximal face} is a face which is maximal (with respect to set inclusion) among proper faces. Every proper face is contained in a maximal face, and every maximal face is exposed.

For $0 \leq d \leq n-1$, we say that a $x \in \partial K$ is $d$-\emph{extreme} (resp.\ $d$-\emph{exposed}) if it is contained in a face of dimension at most $d$ (resp.\ in an exposed face of dimension at most $d$). Note that any boundary point is $(n-1)$-exposed and therefore $(n-1)$-extreme. By an extreme (resp.\ exposed) point we mean a $0$-extreme (resp.\ $0$-exposed) point, i.e.\ a point $x \in \partial K$ such that $\{x\}$ is a face (resp.\ an exposed face). We denote by $\Ext(K)$ the set of extreme points of $K$.

\begin{definition}
For $K \subset \R^n$ a convex body, denote by $\delta(K)$ the smallest $d$ such that there exist an extreme point $u \in K$ and a $d$-extreme point $v \in K$ satisfying $[u,v] \cap \inter(K) \neq \emptyset$ (by Lemma~\ref{lemma:interior-convex}, this is equivalent to saying that $\relint [u,v] \subset \inter(K)$).
\end{definition}

A theorem by Asplund~\cite[Theorem 2.1.7]{Schneider13} states that any $d$-extreme point is the limit of a sequence of $d$-exposed points. It follows that $\delta(K)$ can be equivalently defined as the smallest $d$ such that there exist an exposed point $u \in K$ and a $d$-exposed point $v \in K$ with the property that $[u,v]$ intersects $\mathrm{int}(K)$.

It is easy to check that $\delta(K) \leq n-1$ for every convex body $K \subset \R^n$. A \emph{simplex} in $\R^n$ is a convex body with $n+1$ extreme points (a convex body $K$ is a simplex if and only if the cone $\CC(K)$ is classical). If $K$ is a simplex, we have $\delta(K)=n-1$ (this is because if $F$ is a face of a simplex $K$ with $\dim(F) \leq n-2$ and $x \in \Ext(K) \setminus F$, then $\conv(F \cup \{x\})$ is a proper face, and therefore does not intersect the interior). We show that this property characterises simplices.

\begin{proposition} \label{proposition:deltaK}
Let $K \subset \R^n$ be a convex body that is not a simplex. Then $\delta(K) \leq n-2$.
\end{proposition}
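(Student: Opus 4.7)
I propose to prove the contrapositive: if $\delta(K) = n-1$, then $K$ is a simplex. The hypothesis $\delta(K) = n-1$ translates to the statement that for every extreme point $u \in K$ and every face $F$ of $K$ with $\dim F \leq n-2$, the set $\mathrm{conv}(\{u\} \cup F)$ lies in a common facet of $K$. In particular, when $F$ is a ridge (face of dimension $n-2$) with the two containing facets $H_1(F)$ and $H_2(F)$, this forces $\mathrm{Ext}(K) \subseteq H_1(F) \cup H_2(F)$; dually, for every extreme point $u$, every ridge of $K$ must lie in some facet containing $u$.

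The proof proceeds by induction on $n$; the case $n = 1$ is trivial, since every $1$-dimensional convex body is a segment. For the inductive step, the heart of the argument is to show that $K$ is a pyramid over one of its facets. Fix an extreme point $u_0$ of $K$. Note that the facets of $K$ not containing $u_0$ must be pairwise non-adjacent in the dual graph of $K$: if two such facets shared a ridge $F_0$, then $u_0$ would lie in neither $H_1(F_0)$ nor $H_2(F_0)$, contradicting the hypothesis. Combining this combinatorial observation with the stronger constraints obtained by applying the hypothesis to lower-dimensional faces, and using Lemma~\ref{lemma:antipodal} along directions in the normal cone $N(u_0) := \{\theta : F_\theta = \{u_0\}\}$ (open by Asplund's theorem), one deduces that in fact there is a unique facet $H_0$ of $K$ not containing $u_0$ and that $\mathrm{Ext}(K) \setminus \{u_0\} \subseteq H_0$. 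Consequently $K = \mathrm{conv}(H_0 \cup \{u_0\})$ is a pyramid with apex $u_0$ over the base $H_0$.

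Once the pyramid structure is established, one verifies that $H_0$ viewed as a convex body in $\mathrm{aff}(H_0) \cong \mathbb{R}^{n-1}$ satisfies $\delta(H_0) = n-2$. Indeed, suppose toward contradiction that $\delta(H_0) \leq n-3$, so that there exist $u' \in \mathrm{Ext}(H_0)$ and $v' \in G_0$ (a face of $H_0$ of dimension $\leq n-3$) with $[u', v'] \cap \mathrm{relint}(H_0) \neq \emptyset$. Setting $v_t := t u_0 + (1-t) v'$ for some $t \in (0,1)$, the point $v_t$ lies in the face $\mathrm{conv}(G_0 \cup \{u_0\})$ of $K$, which has dimension $\leq n-2$; a direct computation in the pyramid (using Lemma~\ref{lemma:interior-pyramid}) shows that $[u', v_t]$ meets $\mathrm{int}(K)$ for suitable $t$, producing a witness for $\delta(K) \leq n-2$ and contradicting $\delta(K) = n-1$. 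Hence $\delta(H_0) = n-2$; by the inductive hypothesis $H_0$ is an $(n-1)$-simplex, and therefore $K = \mathrm{conv}(H_0 \cup \{u_0\})$ is an $n$-simplex.

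The main obstacle is establishing the pyramid structure in the inductive step, namely the uniqueness of the facet $H_0 \not\ni u_0$ and the containment $\mathrm{Ext}(K) \setminus \{u_0\} \subseteq H_0$. The weak combinatorial input (facets not containing $u_0$ are pairwise non-adjacent) does not by itself yield uniqueness; one needs the full hypothesis for all faces of dimension $\leq n-2$ and an argument playing off the open, continuous nature of the normal cone $N(u_0)$ (via Lemma~\ref{lemma:antipodal}) against the discrete combinatorial structure of facets, so that the ``opposite facet'' of $u_0$ collapses to a single one. For general (non-polytope) convex bodies, the polytope reduction via Lemma~\ref{lemma:approximation} requires care because the simplex property must be preserved; alternatively, one may argue directly using extreme points and antipodal pairs in the continuous setting.
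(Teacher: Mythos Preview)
Your proposal is not a proof but a strategy with an acknowledged gap at its core. The entire argument hinges on the claim that, assuming $\delta(K)=n-1$, the convex body $K$ is a pyramid over a unique facet $H_0$ not containing a given extreme point $u_0$. You yourself flag this as ``the main obstacle'' and concede that the combinatorial observation (facets missing $u_0$ are pairwise non-adjacent) is insufficient; you then gesture at Lemma~\ref{lemma:antipodal} and ``the open, continuous nature of the normal cone $N(u_0)$'' without supplying any actual argument. In particular, the assertion that $N(u_0)=\{\theta:F_\theta=\{u_0\}\}$ is open ``by Asplund's theorem'' is not what Asplund's theorem says, and your use of Lemma~\ref{lemma:antipodal} is left entirely unspecified. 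As it stands, the crucial step is missing.

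Beyond the gap, the framework is ill-suited to general convex bodies. Your language of facets, ridges, and dual graphs presupposes a polytope; for a body like a cylinder or a body with a smooth boundary region, these notions either degenerate or fail to exist. The suggested fallback via Lemma~\ref{lemma:approximation} is problematic because neither $\delta(\cdot)$ nor the simplex property behaves well under such approximations (a simplex can be approximated by non-simplices with small $\delta$, and vice versa). By contrast, the paper's proof is direct and avoids induction entirely: it splits into two cases according to whether some maximal face has dimension $\leq n-2$ (in which case any segment from an extreme point outside it to its relative interior must cross $\inter(K)$), or all maximal faces are $(n-1)$-dimensional (in which case Lemma~\ref{lemma:maximal-face} provides a maximal face $F$ with two extreme points $x_1,x_2$ outside it, and a short geometric argument with the pyramids $\conv(F\cup\{x_i\})$ produces the desired witness). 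This works uniformly for all convex bodies without any polytope reduction.
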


The following lemma, which appears in~\cite[Proposition 8]{Plavala16}, will be used in the proof of Proposition~\ref{proposition:deltaK}. We include here a proof for convenience.

\begin{lemma} \label{lemma:maximal-face}
Let $K \subset \R^n$ be a convex body that is not a simplex. Then there is a maximal face $F \subset K$
such that $\card(\Ext(K) \setminus F) \geq 2$.
\end{lemma}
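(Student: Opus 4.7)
The plan is to argue by contradiction: assume every maximal face $F$ of $K$ satisfies $\card(\Ext(K)\setminus F)\leq 1$. Since $K=\conv\Ext(K)$ has non-empty interior while every proper face of $K$ is contained in $\partial K$, no maximal face can contain all extreme points, so each $F$ misses \emph{exactly} one extreme point, which I denote $x_F$.

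I next claim that every extreme point $y$ arises as $x_F$ for some maximal face $F$. Given $y\in\Ext(K)$, pick $z\in\inter(K)$; the line through $y$ and $z$ meets $\partial K$ at a second point $y'\neq y$. The boundary point $y'$ lies in some maximal face $F$. If $y\in F$ as well, then $[y,y']\subset F\subset\partial K$ would contain the interior point $z$, a contradiction, so $y\notin F$ and $y=x_F$. Since every maximal face is exposed, it lies in an affine hyperplane; hence for every $y\in\Ext(K)$ there is an affine hyperplane $H_y$ with $\Ext(K)\setminus\{y\}\subset H_y$, and necessarily $y\notin H_y$ (otherwise $K=\conv\Ext(K)\subset H_y$ would violate $\inter(K)\neq\emptyset$).

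The contradiction will then come from a dimensional induction. Because $K$ has non-empty interior, $\Ext(K)$ contains $n+1$ affinely independent points $x_1,\dots,x_{n+1}$; since $K$ is not a simplex, $\card\Ext(K)\geq n+2$, so I pick an additional $x_{n+2}\in\Ext(K)$. I prove by induction on $k=0,1,\dots,n$ that
\[ x_{n+2}\in\aff(x_{k+1},\dots,x_{n+1}), \]
the base case $k=0$ being trivial since $\aff(x_1,\dots,x_{n+1})=\R^n$. For the inductive step, $H_{x_{k+1}}$ contains the $n$ affinely independent points $\{x_i:1\leq i\leq n+1,\ i\neq k+1\}$ and hence coincides with their affine hull; moreover $x_{k+1}\notin H_{x_{k+1}}$, else $H_{x_{k+1}}\supset\aff(x_1,\dots,x_{n+1})=\R^n$. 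Consequently $\aff(x_{k+1},\dots,x_{n+1})$ is not contained in the hyperplane $H_{x_{k+1}}$, so their intersection drops dimension by exactly one and equals $\aff(x_{k+2},\dots,x_{n+1})$. Since $x_{n+2}\in H_{x_{k+1}}$ (because $x_{n+2}\in\Ext(K)\setminus\{x_{k+1}\}$) and, by the induction hypothesis, $x_{n+2}\in\aff(x_{k+1},\dots,x_{n+1})$, I conclude $x_{n+2}\in\aff(x_{k+2},\dots,x_{n+1})$. For $k=n$ this forces $x_{n+2}=x_{n+1}$, contradicting the distinctness of $x_{n+2}$.

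The step I expect to require the most care is the surjectivity of $F\mapsto x_F$---namely, showing that every extreme point is missed by some maximal face---because the ray construction through an interior point is the piece that genuinely uses $\inter(K)\neq\emptyset$; once this is in hand, the remainder is essentially bookkeeping with affine dimensions.
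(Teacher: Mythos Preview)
Your proof is correct. The ray-through-the-interior construction you use to produce, for each extreme point $y$, a maximal face missing $y$ is exactly the construction the paper uses. However, the paper's argument is considerably shorter, and in fact you have already done all the work needed for it without realising.

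The paper applies the ray construction \emph{once}, to the extra extreme point $x_{n+2}$, obtaining a maximal face $F$ with $x_{n+2}\notin F$. Under the contradiction hypothesis $\card(\Ext(K)\setminus F)=1$, this forces $x_1,\dots,x_{n+1}\in F$, whence $\aff(F)=\R^n$, impossible for a proper face contained in $\partial K$. In your language: having established surjectivity, simply look at the hyperplane $H_{x_{n+2}}$. It contains $x_1,\dots,x_{n+1}$, so $H_{x_{n+2}}=\R^n$, a contradiction. Your dimensional induction using $H_{x_1},\dots,H_{x_n}$ is correct but unnecessary; the single hyperplane $H_{x_{n+2}}$ already suffices. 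So both proofs hinge on the same geometric idea, but the paper extracts the contradiction from one well-chosen maximal face rather than from an $n$-step descent.
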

\begin{proof}
Let $A \subset \Ext(K)$ be a set of $n+1$ affinely independent extreme points. Since $K$ is not a simplex, there exists $x \in \Ext(K) \setminus A$. Choose a maximal face $F \subset K$ such that $x \not \in F$ (for example choose for $F$ a maximal face containing $y$, where $y \in \partial K$ is such that $[x,y] \cap \mathrm{int}(K) \neq \emptyset)$. Suppose by contradiction that $\card(\Ext(K) \setminus F)=1$, which means that $\Ext(K) \setminus F = \{x\}$. It follows that $A \subset F$, and therefore $\R^n = \aff (A) \subset \aff(F)$, a contradiction.
\end{proof}

In the next proof we will repeatedly use the following fact: let $F\subseteq \partial K$ be closed and convex (e.g.\ let it be a face), and pick $z\in \relint(F)$. If $H$ is a supporting hyperplane containing $z$, then $F\subset H$.

\begin{proof}[Proof of Proposition~\ref{proposition:deltaK}]
Let us first assume that there exists a maximal face $F \subset K$ with $\dim F\leq n-2$. Take $x \in \Ext(K) \setminus F$ and $y \in \relint (F)$ (if $\dim F= 0$ we have $F=\{y\}$). We claim that $[x,y]$ intersects $\mathrm{int}(K)$. Suppose by contradiction that $[x,y] \subset \partial K$. Consider $z = (x+y)/2 \in \relint [x,y]\subset \partial K$, and let $H$ be a supporting hyperplane containing $z$. Necessarily both $x$ and $y$ belong to $H$, and therefore $K \cap H$ is a face -- in fact, an exposed face -- containing $F \cup \{x\}$. Since $K\cap H\neq K$, we contradict the maximality of $F$.

\begin{figure}[htbp] \begin{center}
\begin{tikzpicture}[scale=1]
	\coordinate (a) at (-2,0);
	\coordinate (b) at (0,-1);
	\coordinate (c) at (2.2,-0.2);
	\coordinate (d) at (2,0.5) ;
	\coordinate (e) at (-1,0.8);
	\coordinate (x) at (-1,4);
	\coordinate (y) at (2,3);
	\coordinate (z) at (0.6,0.9);
	\coordinate (w) at (1,1.5);
	\coordinate (u) at (2.1,0.125);
	\draw[dotted, fill=gray!30] (a)--(b)--(c)--(d)--(e)--(a);
	\draw (a)--(x);
	\draw (b)--(x);
	\draw (c)--(x);
	\draw (d)--(x);
	\draw[dotted] (e)--(x);
	\draw (a)--(b)--(c)--(d);
	\draw (x) node {$\bullet$};
	\draw (y) node {$\bullet$};
	\draw (-1,0) node {$F$};
	\draw (z) node {$\bullet$};
	\draw (z) node [left] {$y$};
	\draw (x) node [above] {$x_1$};
	\draw (y) node [above] {$x_2$};
	\draw (w) node [right] {\, $y'$};
	\draw[dashed] (z)--(w);
	\draw (w)--(y);
	\draw (w) node {$\bullet$};
	\draw[very thick] (c)--(d);
	\draw[densely dashed] (x)--(u);
	\draw (u) node {$\bullet$};
	\draw (u) node [right] {$z$};
	\draw (2.3,0.5) node {$F'$};
	\draw (-1.8,2) node {$C_1$};
	\end{tikzpicture}
\end{center} 
\caption{Proof of Proposition~\ref{proposition:deltaK} when all maximal faces have dimension $n-1$}
\label{figure-deltaK}
\end{figure}
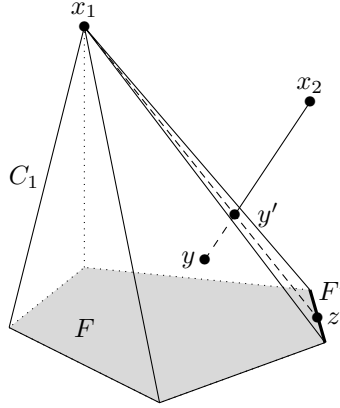

Let us now assume that all maximal faces have dimension $n-1$. Let $F$ be a maximal face given by Lemma~\ref{lemma:maximal-face}, take $x_1 \neq x_2$ in $\Ext(K) \setminus F$ (see Figure~\ref{figure-deltaK}).
Define $C_1 = \conv(F \cup \{x_1\})$ and $C_2 = \conv(F \cup \{x_2\})$, which are convex bodies in $\R^n$. These convex bodies have a common interior point $y$. (Otherwise, by applying the Hahn--Banach separation theorem to the nonempty open convex sets $\inter(C_1)$ and $\inter(C_2)$, one could find a linear form $\ell$ and a real number $t$ such that $\inter(C_1) \subset \{\ell <t\}$ and $\inter(C_2) \subset \{\ell >t\}$. Since a convex body is the closure of its interior, the hyperplane $\{ \ell =t\}$ would contain $C_1 \cap C_2 \supset F$ and therefore equal $\aff(F)$. This is a contradiction because $\aff(F)$ does not separate $\{x_1\}$ and $\{x_2\}$). Since $y \in \inter(C_1)$ and $x_2 \not \in C_1$, the segment $[y,x_2]$ intersects $\partial C_1$ at a point $y'\in \relint [y,x_2]$. By Lemma~\ref{lemma:interior-convex}, $y' \in \inter(C_2)$. We may write $y' = \lambda x_1 + (1-\lambda) z$ for $\lambda \in (0,1)$ and $z \in F$. Since $y' \in \partial C_1$, Lemma \ref{lemma:interior-pyramid} implies that $z \in \relbd(F)$. Consequently, there is a face $F' \subsetneq F$ such that $z \in F'$. On the other hand, $y' \in \inter ( C_2) \subset \inter(K)$, and therefore we have that $\delta(K) \leq \dim (F') \leq n-2$, as needed.
\end{proof}

\subsection{Convex geometry}

We use a couple of elementary lemmata which we state and prove now.

\begin{lemma} \label{lemma:boundary-inclusion}
Let $L_1$, $L_2$ be convex bodies in $\R^n$ such that $\partial L_1 \subset \partial L_2$. Then $L_1 = L_2$. 
\end{lemma}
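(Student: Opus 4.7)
My plan is to establish the equality in two stages: first show $L_1 \subset L_2$, then rule out a strict inclusion.

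For the first inclusion, I would use the standard fact that every point of a convex body lies on a chord with endpoints on the boundary. Given $x \in L_1$, if $x \in \partial L_1$ then $x \in \partial L_2 \subset L_2$ by hypothesis. If instead $x \in \inter(L_1)$, I pick any line through $x$ and intersect it with $L_1$; this produces a segment $[y,z]$ with $y, z \in \partial L_1 \subset \partial L_2 \subset L_2$, and convexity of $L_2$ forces $x \in [y,z] \subset L_2$. This gives $L_1 \subset L_2$, and since $\inter(L_1)$ is open and contained in $L_2$, we also get $\inter(L_1) \subset \inter(L_2)$.

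For the reverse inclusion, I would argue by contradiction: suppose there exists $p \in L_2 \setminus L_1$. Fix any $q \in \inter(L_1)$, so $q \in \inter(L_2)$. Parametrise the segment from $q$ to $p$ as $\gamma(t) = (1-t)q + tp$, and let $t^* = \sup\{ t \in [0,1] \st \gamma(t) \in L_1\}$. Since $L_1$ is closed, $r \coloneqq \gamma(t^*) \in L_1$. We cannot have $t^* = 1$, since that would give $r = p \in L_1$. Nor can we have $t^* = 0$, because $q \in \inter(L_1)$ implies nearby points of the segment also lie in $L_1$. Hence $t^* \in (0,1)$ and by definition $\gamma(t) \notin L_1$ for $t > t^*$, so $r \in \partial L_1$. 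By hypothesis $r \in \partial L_2$. On the other hand, $r = \gamma(t^*) \in \relint[p,q]$, $p \in L_2$, and $q \in \inter(L_2)$, so Lemma~\ref{lemma:interior-convex} yields $r \in \inter(L_2)$, a contradiction. Therefore $L_2 \subset L_1$ and we conclude $L_1 = L_2$.

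I do not anticipate a serious obstacle here: the argument is purely a matter of using convexity together with the observation that a chord from an interior point to a point of the body has its relative interior inside the interior. The only point to be slightly careful about is ensuring that the ``first exit'' point $r$ genuinely lies in $\partial L_1$ and in $\relint[p,q]$, which is handled by ruling out the two extreme values $t^* = 0$ and $t^* = 1$ as above.
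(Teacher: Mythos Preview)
Your proof is correct and follows essentially the same route as the paper: first $L_1 \subset L_2$ (the paper compresses your chord argument into the one-liner $L = \conv(\partial L)$), then a contradiction by locating a point of $\partial L_1$ in the relative interior of a segment from $\inter(L_1)$ to a point of $L_2 \setminus L_1$, which Lemma~\ref{lemma:interior-convex} forces into $\inter(L_2)$. The only cosmetic difference is that you phrase the boundary-crossing via a $\sup$ of parameters, while the paper simply asserts the existence of such a crossing point.
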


\begin{proof}
Since $L=\conv(\partial L)$ for every convex body $L$, we immediately deduce from our hypothesis that $L_1 \subset L_2$.
Suppose by contradiction that the inclusion is strict, and pick $x \in L_2 \setminus L_1$. Choose $y \in \inter(L_1)$, and note that since $x \not \in L_1$, there is $z \in \relint [x,y] \cap \partial L_1$. Then, our assumption implies that $z \in \partial L_2$. On the other hand, since $x \in L_2$ and $y \in \inter(L_1) \subset \inter(L_2)$, we have that $\relint[x,y] \subset \inter(L_2)$ and therefore that $z \in \inter(L_2)$, a contradiction.

Alternatively, a purely topological proof goes as follows: both $\partial L_1$ and $\partial L_2$ are homeomorphic to $S^{n-1}$; since $S^{n-1}$ is not homeomorphic to any of its proper subsets, we conclude that $\partial L_1 = \partial L_2$. 
\end{proof}

\begin{lemma} \label{lemma:boundary-section}
Let $K \subset \R^n$ be a convex body, and let $E \subset \R^n$ be an affine subspace which intersects $\inter(K)$. Then $\relint (K \cap E) = \inter(K) \cap E$ and $\relbd (K \cap E) = \partial K \cap E$.
\end{lemma}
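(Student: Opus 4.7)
The two equalities are equivalent once one establishes the first, because $K\cap E$ is a convex body inside the affine subspace $E$ (its relative interior is nonempty since $E$ meets $\inter(K)$), and inside $E$ we have the disjoint decomposition $K\cap E = \relint(K\cap E) \sqcup \relbd(K\cap E)$, while in $\R^n$ we have $K = \inter(K) \sqcup \partial K$. Intersecting the latter with $E$ yields $K\cap E = (\inter(K)\cap E)\sqcup(\partial K\cap E)$, so once the first equality is proved the second follows automatically by taking complements inside $K\cap E$.

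To prove $\inter(K)\cap E = \relint(K\cap E)$, I would handle the two inclusions separately. The inclusion $\inter(K)\cap E \subset \relint(K\cap E)$ is essentially immediate: if $y\in\inter(K)\cap E$, then some Euclidean ball $B(y,r)\subset K$, and $B(y,r)\cap E$ is a relative $E$-neighbourhood of $y$ contained in $K\cap E$, so $y\in\relint(K\cap E)$.

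The reverse inclusion is the place where one actually uses the hypothesis that $E$ meets $\inter(K)$. Fix once and for all some $x_0\in\inter(K)\cap E$, and take an arbitrary $y\in\relint(K\cap E)$. If $y=x_0$ we are done; otherwise consider the line $\ell\subset E$ through $x_0$ and $y$. Since $y\in\relint(K\cap E)$, the segment $[x_0,y]$ can be prolonged slightly past $y$ while staying in $K\cap E$: there exists $\lambda>0$ with $z\coloneqq y+\lambda(y-x_0)\in K\cap E$, and then $y\in\relint[x_0,z]$. Applying Lemma~\ref{lemma:interior-convex} to the convex set $K$ with $x_0\in\inter(K)$ and $z\in K$ yields $\relint[x_0,z]\subset\inter(K)$, hence $y\in\inter(K)\cap E$, which is the desired inclusion.

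The only step that might look delicate is the prolongation of $[x_0,y]$ past $y$, but this is precisely the standard characterisation of the relative interior of a convex set: a point $y$ of a convex set $C$ lies in $\relint(C)$ iff every line in $\aff(C)$ through $y$ meets $C$ in a segment containing $y$ in its relative interior. No further obstacle is expected.
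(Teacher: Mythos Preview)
Your proof is correct and follows essentially the same route as the paper's: both arguments dispose of the inclusion $\inter(K)\cap E\subset\relint(K\cap E)$ as trivial, then for the reverse inclusion fix a point of $\inter(K)\cap E$, extend the segment from it through an arbitrary $y\in\relint(K\cap E)$ slightly past $y$ inside $K\cap E$, and invoke Lemma~\ref{lemma:interior-convex} to conclude $y\in\inter(K)$; the boundary equality then follows by taking complements in $K\cap E$.
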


\begin{proof} (See also \cite[Corollary 6.5.1]{Rockafellar70}.)
The inclusion $\inter(K) \cap E \subset \relint(K \cap E)$ is simple and holds in full generality. Assume now that $E$ intersects $\inter(K)$, and let $y \in \inter(K) \cap E$. For every $x \in \relint(K \cap E)$, there exists $\e>0$ such that we have $x + \e(x-y) \in K \cap E$. Since the relative interior of the segment $[x +\e (x-y),y]$ is contained in $\inter(K)$, we obtain that $x \in \inter(K)$. This proves the first assertion, and the second follows by taking complements inside $K \cap E$.
\end{proof}

\subsection{Projective transformations}

We will rely on very basic properties of projective transformations in $\R^n$, which we now introduce in an elementary way, referring to~\cite[\S 2.6]{Ziegler95} for more details. We think of projective transformations as the effect on a convex body $K$ of a linear bijective transformation acting on $\CC(K)$. A \emph{projective transformation} is a map $P$ of the form
\[ x \mapsto \frac{B(x)+z}{\langle w, x\rangle + k}, \] where $B:\R^n\rightarrow \R^n$ is a linear map, $z,w\in \R^n$ and $k\in \R$, and moreover 
\[ \det\left(
\begin{array}{cc}
B&z\\
 w^t&k
\end{array}
\right)\neq 0.\]
This map is defined on $\R^n\setminus H$, where $H=\{x\in \R^n:\langle w, x\rangle + k=0$\}, and extends to an automorphism of the projective space. We say that $P$ is well defined on a convex body $K \subset \R^n$ if $K\cap H=\emptyset$.

A projective transformation $P$ preserves properties such as exposedness or extremality of points. Moreover, the cones $\CC(K)$ and $\CC(P(K))$ are isomorphic. 

\begin{lemma} \label{lemma:send-to-infinity}
Let $K$ be a convex body in $\R^n$, and $H_1$, $H_2$ be supporting hyperplanes of $K$, such that $K \cap H_1 \cap H_2 = \emptyset$. Then there is a projective transformation $P$, which is well defined on $K$, such that the supporting hyperplanes $P(H_1)$ and $P(H_2)$ of $P(K)$ are parallel (in the language of projective geometry, $P(H_1)$ and $P(H_2)$ intersect at infinity). \end{lemma}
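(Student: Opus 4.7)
My plan is to find a hyperplane $H$ containing $L \coloneqq H_1 \cap H_2$ that is disjoint from $K$, and then apply a projective transformation that sends $H$ to the hyperplane at infinity. Since projective maps preserve incidence, the images $P(H_1)$ and $P(H_2)$ will still meet along $P(L)$, which now lies at infinity; hence they will be parallel as affine hyperplanes of $\R^n$.

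If $H_1 \parallel H_2$ there is nothing to prove (take $P = \Id$). Otherwise $L$ is an affine subspace of dimension $n-2$. The first key step is a planar separation argument that produces $H$. I plan to pass to the quotient $\pi \colon \R^n \to \R^n / \vec{L} \cong \R^2$, where $\vec{L}$ is the linear direction of $L$. Then $\pi(L)$ is a single point $\ell$ and $\pi(K)$ is a compact convex subset $\tilde{K} \subset \R^2$. The hypothesis $K \cap L = \emptyset$, combined with $\pi^{-1}(\ell) = L$, forces $\ell \notin \tilde{K}$. Standard planar separation then yields a line $D$ through $\ell$ disjoint from $\tilde{K}$, and $H \coloneqq \pi^{-1}(D)$ is a hyperplane containing $L$ with $H \cap K = \emptyset$.

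The second step is the projective change of coordinates. I write $H = \{x \in \R^n : \langle w, x\rangle + k = 0\}$ and arrange $\langle w, \cdot\rangle + k > 0$ on $K$, which is possible because $K$ is compact and disjoint from $H$. Setting
\[ P(x) \coloneqq \frac{x}{\langle w, x\rangle + k}, \]
the map $P$ is well defined on $K$. In homogeneous coordinates on the projective closure $\mathbb{RP}^n$, $P$ extends to a linear automorphism sending the projective closure of $H$ to the hyperplane at infinity. Since $L \subset H$ but $H_i \neq H$ (the hyperplane $H_i$ supports $K$ and therefore meets $K$, whereas $H$ does not), the images $P(H_1)$ and $P(H_2)$ are genuine affine hyperplanes whose projective closures intersect along $P(L)$, which lies at infinity; this is exactly the condition $P(H_1) \parallel P(H_2)$ in $\R^n$. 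They remain supporting hyperplanes of $P(K)$, because $P$ is an affine homeomorphism on a neighborhood of $K$ sending half-spaces to half-spaces (equivalently, $P$ lifts to a linear isomorphism of $\CC(K)$).

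The main obstacle is the planar separation in the second paragraph: this is the only place where the hypothesis $K \cap H_1 \cap H_2 = \emptyset$ enters, and everything afterwards is formal manipulation of projective coordinates. Once $H$ is produced, the projective transformation is essentially forced, and verifying parallelism reduces to the incidence preservation property of $\mathbb{RP}^n$.
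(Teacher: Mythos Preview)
Your strategy is correct and matches the paper's: find a hyperplane $H$ containing $L = H_1 \cap H_2$ and disjoint from $K$, then send $H$ to infinity. The paper bypasses your quotient-and-separation step by writing $H_i = \{f_i = t_i\}$ with $f_i \geq t_i$ on $K$ and taking $H = \{f_1 + f_2 = t_1 + t_2\}$ directly; the hypothesis $K \cap H_1 \cap H_2 = \emptyset$ forces $f_1 + f_2 > t_1 + t_2$ on all of $K$, so disjointness of $H$ from $K$ is immediate. Your detour through $\R^n / \vec{L}$ is longer but perfectly valid, and has the mild advantage of not using that the $H_i$ are supporting (only that $K$ misses their intersection).

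One small gap: your map $P(x) = x / (\langle w, x \rangle + k)$ corresponds, in the paper's notation, to $B = \Id$ and $z = 0$, hence to a projective matrix with determinant $k$. If $H$ happens to pass through the origin then $k = 0$ and $P$ is degenerate. The paper's remedy is to use $P(x) = (x - x_0)/(\langle w, x \rangle + k)$ with $x_0 \in K$, whose determinant is $\langle w, x_0 \rangle + k > 0$; you should do the same, or first translate so that $0 \in K$.
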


\begin{proof}
Simply send $H_1 \cap H_2$ to infinity. In more detail: suppose the hyperplanes are given by $H_i = \{x\in \R^n: f_i(x) = t_i\}$ for linear functionals $f_1$, $f_2$ and real numbers $t_1$, $t_2$, with the property that $f_i(x) \geq t_i$ for every $x\in K$. For an arbitrary $x_0 \in K$, a suitable choice is the projective transformation
\[ P : x \mapsto \frac{x-x_0}{f_1(x)+f_2(x)-t_1-t_2} \]
which is well defined on $K$ since $K\cap H_1 \cap H_2 = \emptyset$. According to the notation above, we must check that 
\[ \det\left(
\begin{array}{cc}
\Id&-x_0\\
 (f_1+f_2)^t&-t_1-t_2
\end{array}
\right)\neq 0.\]
This determinant can be easily computed and equals $-t_1-t_2+f_1(x_0)+f_2(x_0)$, which is positive since $x_0\in K$.

Once we have checked that the projective matrix $P$ is well defined, since $H_1\cap H_2\subset H=\{x\in \R^n: (f_1+f_2)(x)-t_1-t_2=0\}$, we conclude that $P(H_1)\cap P(H_2)=P(H_1 \cap H_2)$ contains only points at infinity. The fact that $P(H_1)$ and $P(H_2)$ are supporting hyperplanes of $P(K)$ follows trivially from the properties of projective maps.
\end{proof}

\section{Proof of Theorem~\ref{theorem:KS}} \label{section:main-proof}

We first prove the easy part of Theorem~\ref{theorem:KS}: a classical cone $\C$ does not admit a kite-square sandwiching. For this we use the fact that $\C$ enjoys the \emph{decomposition property}: whenever the equation $x_1+x_2=y_1+y_2$ is satisfied for $x_1$, $x_2$, $y_1$, $y_2 \in \C$, there exist $z_{11}$, $z_{12}$, $z_{21}$, $z_{22} \in \C$ such that $x_i=z_{i1}+z_{i2}$ and $y_j=z_{1j}+z_{2j}$, for $i,j=1$ or $2$. (This property actually characterises classical cones, see e.g.~\cite[Theorem~2.1]{Effros67}). Assume by contradiction that there is a kite $\kite_\alpha$ and maps $\Phi : V \to \R^3$, $\Psi : \R^3 \to V$ such that $\Phi \circ \Psi = \Id$, $\Psi(\CC(\kite_\alpha)) \subset \C$ and $\Phi(\C) \subset \CC(\bsquare) \subset \CC([-1,1]^2)$.
We consider the vectors $\rayS_1$, $\rayS_2$, $\rayS_3$, $\rayS_4$ introduced in the proof of Proposition~\ref{prop:g-kite}. These vectors generate the extreme rays of the cone $\CC(\kite_\alpha)$, and have the extra property that $\rayS_1+\rayS_3=\rayS_2+\rayS_4$. Note that $\Psi(\rayS_i) \in \C$. Since $\Psi(\rayS_1)+\Psi(\rayS_3)=\Psi(\rayS_2)+\Psi(\rayS_4)$, the decomposition property implies the existence of $z_{12}$, $z_{14}$, $z_{32}$, $z_{34}$ in $\C$ such that
\[ \Psi(\rayS_1) = z_{12}+z_{14},\ \Psi(\rayS_2) = z_{12}+z_{32},\ \Psi(\rayS_3) = z_{32}+z_{34},\ \Psi(\rayS_4) = z_{14}+z_{34}. \]
It follows that
 \[ \rayS_1 = \Phi(z_{12})+\Phi(z_{14}),\ \rayS_2 = \Phi(z_{12})+\Phi(z_{32}),\ \rayS_3 = \Phi(z_{32})+\Phi(z_{34}),\ \rayS_4 = \Phi(z_{14})+\Phi(z_{34}). \]
Each vector $\Phi(z_{ij})$ belongs to $\CC(\bsquare)$ and therefore to $\CC([-1,1]^2)$. We label the $4$ facets of $\CC([-1,1]^2)$ as
\begin{gather*}
F_1 = \{ (t,u \,;\, t) \in \R^3 \st |u| \leq t \}, \quad F_2 = \{ (u,t \,;\, t) \in \R^3 \st |u| \leq t \} \\
F_3 = \{ (-t,u \,;\, t) \in \R^3 \st |u| \leq t \}, \quad F_4 = \{ (u,-t \,;\, t) \in \R^3 \st |u| \leq t \}. 
\end{gather*}
We have that $\rayS_i \in F_i$ for $1 \leq i \leq 4$. It follows from the definition of a face that $\Phi(z_{ij}) \in F_i \cap F_j$ for every $i$, $j$. Since $F_i \cap F_j \cap \CC(\bsquare)=\{0\}$, we obtain that $\Phi(z_{ij})=0$, implying that $\rayS_i=0$, a contradiction.

\medskip

\begin{remark} \label{remark:why-blunt}
Using the blunt square instead of the full square $S=[-1,1]^2$ when defining a kite-square sandwiching is critical to the validity of the previous claim. Indeed, consider the matrix
\[ M = \frac{1}{2} \begin{bmatrix} 1 & 1 & 1 \\ 1 & -1 & 1 \\ -1 & 1 & 1 \\ -1 & -1 & 1\end{bmatrix} . \]
Let $\Psi : \R^3 \to \R^4$ the linear map associated to $M$ and $\Phi : \R^4 \to \R^3$ the linear map associated to the transpose $M^T$. We can check that $\Phi \circ \Psi = \Id$, $\Psi(\CC(\kite_\alpha)) \subset \R_+^4$ for $\alpha=(0,0,0,0)$, and that $\Phi(\R_+^4) = \CC(S)$. Hence, we see that the classical cone $\R_+^4$ does admit a factorisation analogous to the kite-square sandwiching if we replace the blunt square by the full square. However, we have seen in the previous paragraph that it does not admit a kite-square sandwiching in the standard sense.
\end{remark}

\medskip

We move on to the proof of the remaining implication in Theorem~\ref{theorem:KS}. We argue that if $K$ is a convex body which is not a simplex, then $\CC(K)$ admits a kite-square sandwiching. This statement is equivalent to the `only if' part of Theorem~\ref{theorem:KS}. Let $K \subset \R^n$ be a convex body which is not a simplex, and set $d=\delta(K)$. By Proposition~\ref{proposition:deltaK}, we know that $0 \leq d \leq n-2$. By the definition of $\delta(K)$ and the remark following it, there exist an exposed point $v_1 \in K$ and a $d$-exposed point $v' \in K$ such that $[v_1,v'] \cap \inter(K) \neq \emptyset$. Let $H_1$ and $H_2$ be exposing hyperplanes, i.e.\ such that $H_1 \cap K = \{v_1\}$ and $F \coloneqq H_2 \cap K$ is a $d$-dimensional face containing $v'$.

By applying a projective transformation, we may assume that $H_1$ and $H_2$ are parallel (see Lemma~\ref{lemma:send-to-infinity} for details; note that $\delta(P(K)) = \delta(K)$ whenever $P$ is a projective transformation that is well defined on $K$, and that the existence of a kite-square sandwiching for $\CC(K)$ and $\CC(P(K))$ are equivalent since these cones are isomorphic). By further applying an affine transformation, we may therefore reduce to the situation where $v'=0$, $H_1=f^{-1}(1)$ and $H_2=f^{-1}(0)$, for a linear form $f$ satisfying $0 \leq f \leq 1$ on $K$.

Let $V_1$ be the $1$-dimensional linear space spanned by $v_1$ and $V_2$ be the $d$-dimensional linear space spanned by $F$. Note that $V_1 \cap V_2 = \{0\}$, because $f(v_1)\neq 0$ and $V_2\subseteq H_2 = f^{-1}(0)$. Let $V = V_1 \oplus V_2 = \mathspan(V_1 \cup V_2)$. We also note that $H_1 \cap K = \{v_1\}$, $H_2 \cap K = V_2 \cap K = F$, and $0 \in F$.

\begin{claim} \label{claim:pyramid}
We have that $K \cap V = \conv(F \cup \{v_1\})$.
\end{claim}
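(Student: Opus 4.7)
The plan is to apply Lemma~\ref{lemma:boundary-inclusion} to the two convex bodies $L \coloneqq \conv(F \cup \{v_1\})$ and $K \cap V$, both regarded inside the $(d+1)$-dimensional affine space $V$. The inclusion $L \subset K \cap V$ is trivial by convexity. Moreover, since $[v_1,v'] \subset V$ meets $\inter K$, Lemma~\ref{lemma:boundary-section} tells us that $\relbd(K \cap V) = \partial K \cap V$. So the task reduces to establishing $\relbd(L) \subset \partial K \cap V$, where $\relbd(L)$ is computed inside $V$.

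By Lemma~\ref{lemma:interior-pyramid}, the relative boundary of the pyramid $L$ inside $V$ decomposes into three natural pieces: the base $F$, the apex $\{v_1\}$, and the lateral surface $\bigcup_{y \in \relbd F}[v_1,y]$. The first two are easily disposed of: $F$ is a proper face of $K$ contained in $V$, and $v_1 \in V$ is an extreme (indeed exposed) point of $K$, so both sit inside $\partial K \cap V$. The crucial step, and the only one of real geometric content, is showing that every lateral segment $[v_1,y]$ with $y \in \relbd F$ lies entirely in $\partial K$. This is where the minimality of $d = \delta(K)$ enters; the remainder is bookkeeping with the face structure of the pyramid.

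For the lateral segments, any $y \in \relbd F$ belongs to some proper face $F''$ of $F$, and a standard verification shows that a face of a face of $K$ is itself a face of $K$: any segment in $K$ whose relative interior meets $F''$ must lie in $F$ (as $F$ is a face of $K$), and then in $F''$ (as $F''$ is a face of $F$). Hence $F''$ is a face of $K$ of dimension at most $d-1$, so $y$ is $(d-1)$-extreme in $K$. By the very definition of $\delta(K) = d$ as the smallest index with its defining property, the pair $(v_1,y)$, consisting of an extreme point and a $(d-1)$-extreme point, cannot have its joining segment meeting $\inter K$. Since $[v_1,y] \subset K$ by convexity, it follows that $[v_1,y] \subset \partial K$, and trivially $[v_1,y] \subset V$ as both endpoints lie in $V$.

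Combining the three pieces yields $\relbd(L) \subset \partial K \cap V = \relbd(K \cap V)$, and Lemma~\ref{lemma:boundary-inclusion}, applied inside $V \simeq \R^{d+1}$, then gives $L = K \cap V$, which is the claim.
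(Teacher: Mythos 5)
Your proof is correct and follows essentially the same route as the paper: reduce to $\relbd(\conv(F\cup\{v_1\}))\subset\partial K$ via Lemma~\ref{lemma:boundary-section} and Lemma~\ref{lemma:boundary-inclusion}, decompose the boundary of the pyramid via Lemma~\ref{lemma:interior-pyramid}, and kill the lateral segments by the minimality of $\delta(K)$ applied to the pair $(v_1,y)$ with $y$ in a proper face of $F$. The only cosmetic difference is that the paper treats $d=0$ as a separate trivial case, whereas in your argument it is handled vacuously since $\relbd(F)=\emptyset$ there.
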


\begin{proof}
If $d=0$, which means that $F=\{0\}$, it is very easy to see that $K \cap V=[0,v_1]=\conv(0, v_1)$, from which the claim follows.
Let us then assume that $d\geq 1$. Then, we can conclude thanks to Lemma~\ref{lemma:boundary-inclusion}, which we apply with $L_1 = \conv(F \cup \{v_1\})$ and $L_2 = K \cap V$, seen as convex bodies in $V$. To see that both are convex bodies in $V$, it suffices to observe that they are convex compact sets whose affine hull equals $V$, because $V = \aff(F \cap \{v_1\}) \subset \aff(L_1) \subset \aff(L_2) \subset V$. 
We now explain why the hypothesis $\relbd(L_1) \subset \relbd(L_2)$ also holds, which allows us to apply Lemma~\ref{lemma:boundary-inclusion}. Since $V \cap \inter(K) \neq \emptyset$, we have $\relbd(K \cap V) =\partial K \cap V$ by Lemma~\ref{lemma:boundary-section}. Therefore, it remains to justify that 
\begin{equation}  \relbd (\conv(F \cup \{v_1\})) \subset \partial K. \label{eq:relative-boundaries} \end{equation} 
By Lemma \ref{lemma:interior-pyramid}, we have that
\begin{equation*}  \relbd ( \conv(F \cup \{v_1\}) )= F \cup \{v_1\} \cup \Big\{ \lambda v_1 + (1-\lambda)x \st x \in \relbd(F), \lambda \in (0,1) \Big\}.
\end{equation*}
It is obvious that $F \subset \partial K$ and $v_1 \in \partial K$. Choose now $\lambda \in (0,1)$ and $x \in\nobreak \relbd(F)$, and let $G$ a proper face of $F$ containing $x$. Then $G$ is also a face of $K$, and since $\dim(G) < \dim(F) = \delta(K)$, it follows from the minimality in the definition of $\delta(K)$ that $[v_1,x] \cap \inter(K) = \emptyset$, or equivalently $[v_1,x] \subset \partial K$. This proves~\eqref{eq:relative-boundaries} and completes the proof of the claim.
\end{proof}

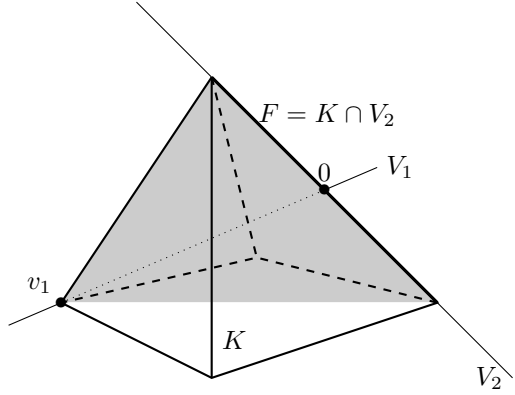
\begin{figure}[htbp] \begin{center}
\begin{tikzpicture}[scale=1]
	\coordinate (a) at (0,3);
	\coordinate (b) at (-2,0);
	\coordinate (c) at (0,-1);
	\coordinate (d) at (3,0) ;
	\coordinate (e) at (0.6,0.6);
	\coordinate (o) at (1.5,1.5);
	\draw[white,fill=gray!40] (b)--(d)--(a)--(b) ;
	\draw[thick] (a)--(b)--(c)--(d)--(a) ;
	\draw[thick] (a)--(c) ;
	\draw[thick,dashed] (d)--(e)--(b) ;
	\draw[thick,dashed] (a)--(e) ;
	\draw[very thick] (a)--(d) ;
	\draw (-1,4)--(4,-1) ;
	\draw (-2.7,-0.3)--(b) ;
	\draw (o)--(2.2,1.8) ;
	\draw[dotted] (b)--(o) ;
	\draw (o) node {$\bullet$};
	\draw (o) node[above] {$0$};
	\draw (b) node {$\bullet$};
	\draw (b) node[above left] {$v_1$};
	\draw (4,-1) node[left] {$V_2$};
	\draw (2.2,1.8) node[right] {$V_1$};
	\draw (.5,2.5) node[right] {$F=K \cap V_2$};
	\draw (0.3,-0.5) node {$K$};
	\end{tikzpicture}
\end{center}
\caption{Illustration for the proof when $K \subset \R^3$ is a pyramid over a square. We have $\delta(K)=1$. The section $K \cap V$ is depicted in gray. In that case $\dim(W)=1$ and $L$ is a segment with $0$ in the interior. }
\end{figure}

Choose an arbitrary subspace $W \subset \R^n$ such that $\R^n = V \oplus W$. Note that $\dim (W) = n-(d+1) \geq 1$. Let $\pi$ be the projection with range $W$ and kernel $V$. Denote $L \coloneqq \pi(K)$, so that $L$ is a convex body in $W$. Indeed, (a) $L$ is clearly convex and compact; and (b) picking $z\in [0,v_1] \cap \inter(K) \neq \emptyset$, since $0,v_1\in \ker (\pi)$ we have that $0=\pi(z)\in \pi(\inter(K)) \subseteq \relint (\pi(K)) = \relint (L)$.

By Lemma~\ref{lemma:antipodal}, there is an antipodal pair $\{x_1,x_2\}$ in $L$ such that $0 \in [x_1,x_2]$. More precisely, there is a linear form $\ell$ on $W$ such that
\[ \ell(x_1) =  \min_{L} \ell <0 < \max_{L} \ell =\ell(x_2) \]
(the inequalities are strict since $\dim (W) \geq 1$ and $0 \in \relint(L)$). Without loss of generality (replace $\ell$ by a suitable positive multiple), we can assume that $\ell(x_2)-\ell(x_1)=\nobreak 1$. Call $\mu = \ell(x_2) \in (0,1)$, so that $\ell(x_1)=\mu-1$. Since $0 \in [x_1,x_2]$, by looking at the action of $\ell$ one sees that necessarily $\mu x_1 +(1-\mu)x_2 = 0$

Consider preimages $y_1$, $y_2$ in $K$ such that $\pi(y_1)=x_1$ and $\pi(y_2)=x_2$. We have that $\mu y_1 + (1-\mu)y_2 \in K \cap \ker(\pi) = K \cap V$. By Claim~\ref{claim:pyramid}, there is $0 \leq \lambda \leq 1$ and $v_2 \in F$ such that 
\begin{equation} \label{eq:y-v-relation} \mu y_1 + (1-\mu) y_2 = \lambda v_1 + (1-\lambda)v_2. \end{equation}
Applying $f$ to the previous equation yields $\mu f(y_1) + (1-\mu) f(y_2) = \lambda$. 
Since $x_i = \pi(y_i)\neq 0$ (otherwise e.g.\ $\ell(x_i)=0$) neither of $y_1,y_2$ belongs to $K\cap V$, which implies that $f(y_i)\in (0,1)$ (because $f=1$ on $K$ only at $v_1$ and $f=0$ only on $F$) and hence that $\lambda \in (0,1)$.

We are going to produce a kite-square sandwiching for $\CC(K)$ out of this situation. Define a linear map $\Psi : \R^n \times \R \to \R^2 \times \R$ by the formula
\[ \Psi(x\,;\,t) = \Big(t-2f(x),(1-2\mu)t+2\ell(\pi(x))\,;\,t\Big).\]
We claim that $\Psi(\CC(K)) \subset \CC(\bsquare)$. It is enough to check that $\Psi(x\,;\,1) \subset \bsquare \times \{1\}$ for every $x \in K$, i.e.\ that
\begin{equation} \label{eq:points-in-blunt-square} \Big(1-2f(x), 1-2\mu+2\ell(\pi(x))\Big) \end{equation}
belongs to the blunt square $\bsquare$. On the set $K$, the functional $f$ takes on values in $[0,1]$ and $\ell \circ \pi$ takes on values in $[\mu-1,\mu]$, so each coordinate in~\eqref{eq:points-in-blunt-square} belongs to $[-1,1]$. It remains to check that they cannot be $\pm 1$ simultaneously. Indeed, if the first coordinate equals $\pm 1$, i.e.\ if $x \in K$ is such that $f(x) \in \{0,1\}$, then $x \in \{v_1\} \cup F \subset V$, so that $\ell(\pi(x))=0$; together with the fact that $\mu\in (0,1)$, this shows that the second coordinate is in $(-1,1)$. Therefore, $\Psi(\CC(K)) \subset \CC(\bsquare)$.

We will now construct a map $\Phi : \R^2 \times \R \to \R^n \times \R$ such that $\Psi \circ \Phi= \Id$. This is straightforward, since we only need to pick a suitable kite $\kite_\alpha \subset \bsquare$ and map it to $K$. Define numbers $(\alpha_i)_{1 \leq i \leq 4}$ in $(-1,1)$ by the formulae
\begin{gather*}
(1,\alpha_1\,;\,1) = (1,1-2\mu\,;\,1) = \Psi(v_2\,;\,1), \\
(\alpha_2,1\,;\,1) = (1-2f(y_2),1\,;\,1) = \Psi(y_2\,;\,1), \\
(-1,\alpha_3\,;\,1) = (-1,1-2\mu\,;\,1) = \Psi(v_1\,;\,1), \\
(\alpha_4,-1\,;\,1) = (1-2f(y_1),-1\,;\,1) = \Psi(y_1\,;\,1) ,
\end{gather*}
and consider the kite $\kite_{\alpha}$. We note that
\begin{equation} \label{eq:alpha-relation} \mu(\alpha_4,-1\,;\,1) + (1-\mu)(\alpha_2,1\,;\,1) 
= \lambda (-1,\alpha_3\,;\,1) + (1-\lambda) ( 1,\alpha_1\,;\,1).
\end{equation}
We define a linear map $\Phi : \R^2 \times \R \to \R^n \times \R$ by requiring that
\[ \Phi(1,\alpha_1\,;\,1)=(v_2\,;\,1),\ \Phi(\alpha_2,1\,;\,1)=(y_2\,;\,1),\ \Phi(-1,\alpha_3\,;\,1)=(v_1\,;\,1),\ \Phi(\alpha_4,-1\,;\,1)=(y_1\,;\,1).\]
One checks that $\Phi$ is well defined by comparing equations~\eqref{eq:alpha-relation} and~\eqref{eq:y-v-relation}, and by observing that $\Psi \circ \Phi= \Id$.
It is clear that $\Phi(\CC(\kite_\alpha)) \subset \CC(K)$, since by definition of $\Phi$ this is satisfied for each of the $4$ extreme rays of $\CC(\kite_{\alpha})$. We have checked all the conditions for the existence of a kite-square sandwiching, and the proof of Theorem~\ref{theorem:KS} is therefore complete.

\section*{Acknowledgements}

We are very grateful to Kyung Hoon Han for several remarks which helped us clarifying the manuscript. We thank also Alexander M\"uller-Hermes for useful comments on some of the results.
GA was supported in part by ANR (France) under the grant StoQ (2014-CE25-0003). LL acknowledges financial support from the European Research Council under the Starting Grant GQCOP (Grant no.~637352), from the Foundational Questions Institute under the grant FQXi-RFP-IPW-1907, and from the Alexander von Humboldt Foundation. CP is partially supported by Spanish MINECO through Grant No.~MTM2017-88385-P, by the Comunidad de Madrid through grant QUITEMAD-CM P2018/TCS4342 and by SEV-2015-0554-16-3. MP acknowledges support from grant VEGA 2/0142/20, from the grant of the Slovak Research and Development Agency under contract APVV-16-0073, from the Deutsche Forschungsgemeinschaft (DFG, German Research Foundation - 447948357) and the ERC (Consolidator Grant 683107/TempoQ).

\bibliography{entangleability}{}
\bibliographystyle{plain}

\end{document}